\documentclass[11pt,a4paper,twoside]{article}
\usepackage{amsfonts,amsmath,amssymb}
\usepackage{theorem}
\usepackage{cite}

\newif\ifdraftcites \draftcitestrue
\ifdraftcites\else
  \let\LaTeXbibitem\bibitem
  \renewcommand{\bibitem}[2][]{\LaTeXbibitem{#2}}
\fi
\usepackage{indentfirst}
\usepackage{graphicx}
\usepackage{float}
\graphicspath{{./}{verify/fig/}}

\usepackage[T1,T2A]{fontenc}
\usepackage[utf8]{inputenc}
\usepackage[russian,english]{babel}

\AtBeginDocument{} \AtBeginDocument{}

\theorembodyfont{\sl}
\newtheorem{theorem}{Theorem}[section]
\newtheorem{proposition}[theorem]{Proposition}

\newtheorem{lemma}[theorem]{Lemma}
\newtheorem{corollary}[theorem]{Corollary}
\newtheorem{remark}[theorem]{Remark}
\newtheorem{example}[theorem]{Example}

\numberwithin{equation}{section} \numberwithin{theorem}{section}
\newcommand{\card}{\operatorname{card}}
\newcommand{\bxi}{\boldsymbol{\xi}}

\makeatletter
\renewenvironment{thebibliography}[1]
 {\section*{\centerline{\rm\bfseries References}}%
 \@mkboth{\MakeUppercase\refname}{\MakeUppercase\refname}%
 \list{\@biblabel{\@arabic\c@enumiv}}%
 {\settowidth\labelwidth{\@biblabel{#1}}%
 \leftmargin\labelwidth
 \advance\leftmargin\labelsep
 \@openbib@code
  \usecounter{enumiv}%
  \let\p@enumiv\@empty
  \renewcommand\theenumiv{\@arabic\c@enumiv}}%
  \sloppy
  \clubpenalty4000
  \@clubpenalty \clubpenalty
  \widowpenalty4000%
  \sfcode`\.\@m
  \setlength{\itemsep}{-0.1cm}}
  {\def\@noitemerr
  {\@latex@warning{Empty 'thebibliography' environment}}%
 \endlist}
\renewcommand{\@biblabel}[1]{#1.}
\makeatother \hfuzz=0.5pt \tolerance=500

\begin{document}
\begin{center}
	\textbf{Numerical differentiation of functions on the half-line\\ in a weighted uniform metric}
\end{center}
\vspace*{10mm} \centerline{\textsc {S.\,G.\,Solodky $\!\!{}^{\dag}$, Y.\,A.\,Volynets $\!\!{}^{\ddag}$}}

\vspace*{5mm}
\centerline{$\!\!{}^{\dag}\!\!$ Institute of Mathematics, National Academy of Sciences of Ukraine, Kyiv}
\centerline{$\!\!{}^{\ddag}\!\!$ National University of Kyiv-Mohyla Academy, Kyiv, Ukraine}

\vspace*{8mm}

\begin{abstract}
\begin{center}
\parbox{\dimexpr\textwidth-24mm}{\small
We consider the problem of numerical differentiation of functions defined on the half-line:
the derivative $f^{(r)}$, $ r=1,2,\ldots$, is recovered from a finite set of perturbed
Fourier--Laguerre coefficients of a function $f$, whose error is measured in $\ell_p$, while
the accuracy of approximation is measured in the uniform metric with the weight $t^{\alpha}e^{-t}$. It is established
that the Wiener class $W^{\mu}_{s}$ admits such a setting for $\mu>r+1-1/s$.
A class of methods $\mathcal{S}^{\theta}$ is constructed that realize the optimal order
of accuracy of numerical differentiation on $W^{\mu}_{s}$ and use the smallest
amount of input information in order.
}
\end{center}
\end{abstract}

\vspace*{3mm}

\noindent{\textit{Key words:}
Numerical differentiation, regularization, Laguerre polynomials,
Wiener class, information complexity, optimal recovery}
\vspace*{3mm}

\noindent\textit{2020 Mathematics Subject Classification:} Primary: 65D25; Secondary: 41A25, 42C10, 65D15, 65Y20.

\section{Introduction. Preliminaries}\label{prelim}

In the present work we study the problem of optimal recovery of $f^{(r)}$ on the half-line
from perturbed Fourier--Laguerre coefficients of a function $f$ from a Wiener class.
The results of our research have as their field of applied use
such problems as, for example, differentiation of a noisy signal by an orthogonal filter \cite{LiGPe},
Cauchy problems on the half-line \cite{Shar}, \cite{KasHag},
inversion of the Laplace transform and relaxation spectra \cite{Coh},
spectral methods on unbounded domains \cite{Shen}, \cite{NeHa}.
Despite the rich history of the study of approximation properties of Laguerre polynomials,
which takes its beginning from \cite{Muck70_2}, the problem of optimization of differentiation methods
by means of Laguerre polynomials is still little studied.
Among the works devoted to this subject one can name only \cite{MelMic}.
However, the error there was estimated in the Hilbert metric.
As for the optimization of differentiation methods on the half-line in the weighted
uniform metric, the authors are not aware of a single work in this area.
It is exactly to filling this gap in the research that the present work is devoted.
Let us add that earlier in a number of papers
\cite{Sem_Sol_2024}, \cite{Sem_Sol_2025}, \cite{Kys}, \cite{Sem_Sol_2026}
the problem of optimization of differentiation methods for functions defined on a finite interval was studied.
Separately, we would also like to note the recent work of the authors \cite{SolVol}, where
the problem of optimization of summation methods for functions defined on the half-line was studied.
Thus, the present paper can be regarded as a natural
continuation of \cite{SolVol}.

Let us draw attention to one more fact. The error of recovery of the derivative is sought here
in a fixed space that does not depend on the order of differentiation.
Thereby our model differs essentially from the overwhelming majority of the works
of predecessors, where the weight was matched to the order of the derivative, which made it possible
to reduce the error.
The mismatched weight in our work is neither a simplification nor an oversight.
Our approach is motivated by practical considerations: if the output norm does not depend on $r$,
then one method can optimally approximate derivatives of different orders in the same metric
(see remark \ref{rem_onemeth}).
Owing to this, optimal accuracy is achieved in solving those problems
in whose setting derivatives of different orders of the differentiated function occur.
It is exactly such an approach that provides high efficiency in solving problems of this kind
in practice.

For the presentation of the material we shall need the following notation and notions.
Throughout the paper $\alpha\ge 0$ is a real number (not necessarily an integer), $t>0$, and
$r\ge 1$ is a fixed integer, the order of the derivative to be recovered.
On the half-line $(0,\infty)$ we use the weight $w(t)=t^\alpha e^{-t}$; this weight does \emph{not} depend
on~$r$.\looseness=-1

By $L_{2,w}=L_{2,w}(0, \infty)$ we denote the weighted Hilbert space of
real-valued functions $f$ that are square-summable on $(0,\infty)$ with the weight $w$, with the
inner product and norm
$$
\langle f, g\rangle=\int_{0}^{\infty} t^{\alpha} e^{-t} f(t) g(t)\, d t ,
\qquad
\|f\|_{2}^2 := \int_{0}^\infty t^{\alpha} e^{-t} |f(t)|^2 \, d t .
$$
The Laguerre polynomials (see, for example, \cite[ch.\,5]{Szego}) can be represented in the form
$$
L_k^{(\alpha)}(t)
= \sum_{m=0}^k \frac{\Gamma(k+\alpha+1)}{\Gamma(m+\alpha+1) (k-m)!} \, \frac{(-t)^m}{m!} , \quad k\in \mathbb{N}_0 ,
$$
where $\mathbb{N}_0=\{0\}\cup\mathbb{N}$. Since
$\int_0^\infty (L_k^{(\alpha)})^2 w\, dt = \Gamma(k+\alpha+1)/k!$, the system orthonormal in
$L_{2,w}$ is
\begin{equation}\label{normalisation}
\ell_k^{(\alpha)}(t) := \left(\frac{k!}{\Gamma(k+\alpha+1)}\right)^{1/2} L_k^{(\alpha)}(t) .
\end{equation}
We call $\varphi_k^{(\alpha)}(t) = \ell_k^{(\alpha)}(t) \sqrt{w(t)}$ the Laguerre functions, and
the Fourier--Laguerre coefficients of a function $f$ --- the numbers
$\langle f, \ell_{k}^{(\alpha)}\rangle$.

By $C$ we denote the space of all real-valued functions continuous on
$(0,\infty)$, and we put $\|g\|_{C}:= \sup_{t > 0} |g(t)|\in[0,+\infty]$. This quantity
is defined for every $g\in C$, but it is not a norm on the whole of $C$: it can be
infinite. Everywhere below, wherever it occurs, it is applied to a product of the form $g\sqrt{w}$, and
its finiteness is part of the statement being proved, not a convention of notation. The space
$C$ itself is taken in full where it serves as the range of the recovery operator (section
\ref{opt}): the value of such an operator can be a polynomial unbounded on $(0,\infty)$.
By $\ell_p$, $1\leq p\leq\infty$, we denote the space
of numerical sequences $\mathbf{x}=\{x_{k}\}_{k\in\mathbb{N}_0}$ with the usual norm
$$
\|\mathbf{x}\|_{\ell_p}  := \left\{
\begin{array}{cl}
\bigg(\sum\limits_{k\in\mathbb{N}_0} |x_{k}|^p\bigg)^{\frac{1}{p}} ,
 \ & 1\leq p<\infty ,
\\\\
\sup\limits_{k\in\mathbb{N}_0}  |x_{k}| ,
  \ & p=\infty .
\end{array}
\right.
$$
The function classes are the weighted Wiener classes (see, for example, \cite{Kolom2023}):
$$
W_{s}^\mu =\{f\in L_{2,w}: \ \|f\|_{s,\mu}^s :=\sum_{k=0}^{\infty} ({\underline{k}})^{s\mu}|\langle f,
\ell_{k}^{(\alpha)}\rangle|^s<\infty\},\quad 1\le s<\infty,
$$
$$
W_{\infty}^\mu =\{f\in L_{2,w}:
 \|f\|_{\infty,\mu} =
 \sup\limits_{k\in \mathbb{N}_0}\, \underline{k}^{\mu}\,
 |\langle f , \ell_{k}^{(\alpha)} \rangle | < \infty\},
$$
where $\mu>0$ and $\underline{k}=\max\{1,k\}$. We use the same notation for
a space and its unit ball, which we call a function class.

\vskip 4mm

\begin{remark}\label{Tom}
In \cite{Tom}, it was established that the spaces $W^r_{2}$, $r\in\mathbb{N}$,
consist of functions $f$, for which
$f, f',\ldots, f^{(r-1)}$
are absolutely continuous on any interval $(\eta_1,\eta_2)$, $0<\eta_1<\eta_2<\infty$, and the condition
$$\int_{0}^{\infty}  |f^{(r)}(t)|^2 t^{r+\alpha} e^{-t} \mathrm{d} t < \infty $$
is satisfied,
therefore, $W^r_{2}$ can be equipped with an equivalent norm
$$
\|f\|_{W^r_{2}} := \left(\sum_{j=0}^r \int_{0}^{\infty}  |f^{(j)}(t)|^2 t^{j+\alpha} e^{-t}
\mathrm{d}t\right)^{1/2} .
$$
\end{remark}

\vskip 2mm

We assume that instead of the exact Fourier--Laguerre coefficients
$\mathbf{f} = ( \langle f, \ell^{(\alpha)}_{k}\rangle )_{k\in\mathbb{N}_0}$ we are given
perturbed data
$$
\mathbf{f}^\delta = \mathbf{f} + \bxi , \qquad
\mathbf{f}^\delta = ( f^\delta_k )_{k \in \mathbb{N}_0} ,
$$
where the error sequence $\bxi=(\xi_k)_{k\in\mathbb{N}_0}$ is a sequence
of real numbers of which it is not assumed that it is the sequence
of coefficients of any element of $L_{2,w}$, and the error of the input data is measured in
the $\ell_p$ norm: for some $1 \leq p \leq \infty$
\begin{equation}\label{perturbation2}
	\|\bxi\|_{\ell_p} \leq \delta, \quad 0 < \delta < 1.
\end{equation}
We study the regularizing properties of the summation methods
\begin{equation}\label{ModVer}
S_N^\nu \mathbf{f}^\delta (t) = \sum_{k=0}^{N}
\nu_k^N f^\delta_k \, \ell^{(\alpha)}_{k}(t)
\end{equation}
for triangular arrays $\nu=\{\nu_k^N\}$, $k=0,\ldots,N$, $N\in \mathbb{N}$, subject to the
condition: there exist $C(\nu)>0$ and $\theta>0$ such that
\begin{equation}\label{qual}
|1- \nu_k^N| \le C(\nu) \left(\frac{k}{N}\right)^\theta,\quad 0^\theta=0, \quad  k=0,1,\ldots,N .
\end{equation}
By $\mathcal{S}^\theta$ we denote the class of all methods (\ref{ModVer}) satisfying
(\ref{qual}).
Of greatest interest are the methods (\ref{ModVer}) with the maximal admissible value of $\theta$, i.e.\ with $\theta=\infty$.
It is exactly such methods that can be effectively applied to derivatives of any order.
Among the methods (\ref{ModVer}) with maximal $\theta$, the best known are the Fourier and the de la Vall\'{e}e Poussin methods.
Below we describe two more truncation methods with an arbitrary $\theta$, taken by us from the work \cite{OcTh}.

\begin{example}\label{Ex1}\rm
\emph{Smoothed truncation methods.} Let $\rho\in(0,1)$ be fixed and $n_\rho:=\lfloor\rho
N\rfloor$; the method leaves the indices $k\le n_\rho$ unchanged, and on the remaining ones it
damps smoothly.

{\rm(1)} \emph{Linear truncation} (the de la Vall\'{e}e Poussin method):
\begin{equation}\label{vp}
\nu^N_k =
\begin{cases}
1 , & 0\le k\le n_\rho , \\[2pt]
\dfrac{N-k}{N-n_\rho} , & n_\rho<k\le N .
\end{cases}
\end{equation}
For $\rho=1/2$ and even $N=2n$ this is exactly the classical de la Vall\'{e}e Poussin method
(see, for example, \cite[example 1.1]{SolVol}):
$\nu^{2n}_k=1$ for $k\le n$ and $\nu^{2n}_k=(2n-k)/n$ for $n<k\le 2n$.

{\rm(2)} \emph{Cosine truncation} (the Hann window):
\begin{equation}\label{hann}
\nu^N_k =
\begin{cases}
1 , & 0\le k\le n_\rho , \\[2pt]
\dfrac12\left(1+\cos\dfrac{\pi(k-n_\rho)}{N-n_\rho}\right) , & n_\rho<k\le N .
\end{cases}
\end{equation}

Both methods are given by an explicit formula, computable from $k$ in a bounded number of operations, and both
satisfy (\ref{qual}) \emph{for every} $\theta>0$ with $C(\nu)=\rho^{-\theta}$.
\end{example}

By $c$ we denote a positive
constant depending on $\alpha$, $r$, $\mu$, $s$, $p$, $\theta$, $C(\nu)$ and (in sections \ref{opt},
\ref{sharp}) on the constant $K$ of the definition (\ref{Nmin}), but never on
$N$, $\delta$, $f$, $\mathbf{f}^\delta$ or $\bxi$; its value may change from line to
line. By $S_N f$ we denote the ordinary Fourier--Laguerre sum, that is, (\ref{ModVer}) with
$\nu_k^N\equiv1$ and exact data.

For nonnegative quantities $A$ and $B$ depending on the variables $N$, $k$, $t$ or $\delta$,
the notation $A\preceq B$ means that $A\le c\,B$ with a constant $c$ of the indicated kind, that is, not
depending on these variables; $A\succeq B$ means $B\preceq A$, and $A\asymp B$ --- that
both hold. The sign $\asymp$ is applied also to the rule for choosing the truncation level: the notation
$N\asymp\delta^{-1/(\mu+1/s-1/p)}$ means that $a_1\delta^{-1/(\mu+1/s-1/p)}\le N\le a_2\delta^{-1/(\mu+1/s-1/p)}$ with
constants $0<a_1\le a_2$ not depending on $\delta$.

Thus, it is $f^{(r)}$ that is to be recovered, and the error is measured in the weighted uniform
metric:
$$
\bigl\| \bigl(f-S_N^\nu \mathbf{f}^\delta\bigr)^{(r)} \sqrt{w} \bigr\|_{C} .
$$
Before estimating this quantity, three auxiliary facts are needed: the differentiation
formula, the growth of a single differentiated Laguerre function, and the condition under
which this quantity is finite at all on the class.

\subsection{Differentiation formula}

\begin{lemma}\label{lemma_diff}
For all integers $k\ge r$ and all $t>0$ there holds the equality
\begin{equation}\label{diff_formula}
\frac{d^r}{dt^r}\,\ell_k^{(\alpha)}(t)
= (-1)^r \sqrt{\frac{k!}{(k-r)!}}\; \ell_{k-r}^{(\alpha+r)}(t) ,
\end{equation}
moreover, $\frac{d^r}{dt^r}\ell_k^{(\alpha)}\equiv 0$ for $k<r$.
\end{lemma}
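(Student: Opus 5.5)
The plan is to first establish the classical one-step identity for the unnormalized polynomials,
\[
\frac{d}{dt}L_k^{(\alpha)}(t) = -L_{k-1}^{(\alpha+1)}(t),\qquad k\ge 1,
\]
and $\frac{d}{dt}L_0^{(\alpha)}\equiv 0$. Both follow by differentiating the explicit representation termwise. The derivative of the $m$-th term is
\[
\frac{\Gamma(k+\alpha+1)}{\Gamma(m+\alpha+1)(k-m)!}\cdot\frac{-(-t)^{m-1}}{(m-1)!}.
\]
Shift $m=j+1$, $j=0,\dots,k-1$. The coefficient becomes
\[
-\frac{\Gamma((k-1)+(\alpha+1)+1)}{\Gamma(j+(\alpha+1)+1)\,((k-1)-j)!}\cdot\frac{(-t)^j}{j!},
\]
which is exactly the $j$-th coefficient of $-L_{k-1}^{(\alpha+1)}$. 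This is a routine index check, and it works for any real $\alpha\ge0$ since only $\Gamma$ identities are involved.

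Next, iterate $r$ times by induction on $r$, applying the one-step identity with parameter $\alpha+i$ at the $i$-th step. This gives
\[
\frac{d^r}{dt^r}L_k^{(\alpha)} = (-1)^r L_{k-r}^{(\alpha+r)}\quad\text{for } k\ge r.
\]
For $k<r$ the claim $\frac{d^r}{dt^r}\ell_k^{(\alpha)}\equiv0$ is immediate, since $\ell_k^{(\alpha)}$ is a polynomial of degree $k<r$. Alternatively, the iteration reaches $L_0$ before the $r$-th derivative, and the next differentiation kills it.

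Finally, pass to the orthonormal functions via (\ref{normalisation}):
\[
\frac{d^r}{dt^r}\ell_k^{(\alpha)} = (-1)^r\left(\frac{k!}{\Gamma(k+\alpha+1)}\right)^{1/2} L_{k-r}^{(\alpha+r)}
= (-1)^r\left(\frac{k!}{\Gamma(k+\alpha+1)}\right)^{1/2}\left(\frac{\Gamma(k-r+\alpha+r+1)}{(k-r)!}\right)^{1/2}\ell_{k-r}^{(\alpha+r)}.
\]
The Gamma factors cancel because $\Gamma(k-r+\alpha+r+1)=\Gamma(k+\alpha+1)$, leaving $\sqrt{k!/(k-r)!}$. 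This yields (\ref{diff_formula}).

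The only place needing care is the index shift in the first step, specifically checking that $\Gamma(m+\alpha+1)$ with $m=j+1$ matches $\Gamma(j+(\alpha+1)+1)$ and that $(k-m)!=((k-1)-j)!$. Everything else is bookkeeping, and I do not expect a genuine obstacle.
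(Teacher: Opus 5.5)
Your proposal is correct and follows the paper's proof: the one-step identity $\frac{d}{dt}L_k^{(\alpha)}=-L_{k-1}^{(\alpha+1)}$ is iterated $r$ times and then converted to the orthonormal system using $\Gamma(k+\alpha+1)=\Gamma((k-r)+(\alpha+r)+1)$, with the degree argument handling $k<r$. The only difference is that you verify the one-step identity directly from the explicit series, and your index check is right, whereas the paper cites it as a classical fact.
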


\textit{Proof.} It is classical (see, for example, \cite[ch.\,5]{Szego})
that
$$
\frac{d}{dt}L_k^{(\alpha)}=-L_{k-1}^{(\alpha+1)},
$$
whence
$$
\frac{d^r}{dt^r}L_k^{(\alpha)}=(-1)^r L_{k-r}^{(\alpha+r)}
$$
for $k\ge r$; for $k<r$ the derivative is identically zero, since the degree of
$L_k^{(\alpha)}$ equals $k$.
By virtue of $\Gamma(k+\alpha+1)=\Gamma((k-r)+(\alpha+r)+1)$ we have
$$
\left(\frac{k!}{\Gamma(k+\alpha+1)}\right)^{1/2}
= \left(\frac{k!}{(k-r)!}\right)^{1/2}
\left(\frac{(k-r)!}{\Gamma((k-r)+(\alpha+r)+1)}\right)^{1/2} .
$$
Whence, taking into account (\ref{normalisation}), we obtain (\ref{diff_formula}). \\
$\Box$

\vskip 2mm

Formula (\ref{diff_formula}) together with $w(t)=t^\alpha e^{-t}$ gives the identity that we
use everywhere below:
\begin{equation}\label{reduction}
\bigl(\ell_k^{(\alpha)}\bigr)^{(r)}(t)\sqrt{w(t)}
= (-1)^r \sqrt{\frac{k!}{(k-r)!}}\; t^{-r/2}\, \varphi_{k-r}^{(\alpha+r)}(t) ,
\qquad k \ge r .
\end{equation}
Both factors on the right grow in $k$ no faster than $k^{r/2}$; this is exactly the content of
the following lemma.

\subsection{Scale of the derivative}


\begin{lemma}\label{lemma_scale}
Let $\alpha\ge0$ be real and $j$ --- an integer, $1\le j\le r$. Then for all $k\ge j$
$$
\bigl\| \bigl(\ell_k^{(\alpha)}\bigr)^{(j)} \sqrt{w} \bigr\|_{C} \le c\, k^{j} .
$$
\end{lemma}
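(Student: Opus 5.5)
By the reduction identity (\ref{reduction}) applied with $r$ replaced by $j$, we have for $k\ge j$
$$
\bigl(\ell_k^{(\alpha)}\bigr)^{(j)}(t)\sqrt{w(t)} = (-1)^j\sqrt{\frac{k!}{(k-j)!}}\; t^{-j/2}\,\varphi_{k-j}^{(\alpha+j)}(t).
$$
The prefactor satisfies $\sqrt{k!/(k-j)!}\le k^{j/2}$. The claim therefore reduces to the uniform bound
$$
\sup_{t>0} t^{-j/2}\bigl|\varphi_n^{(\beta)}(t)\bigr| \le c\,(n+1)^{j/2},\qquad n=k-j,\ \beta=\alpha+j\ge j ,
$$
with $c$ depending only on $\alpha$ and $j$. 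Because $n+1\le k$ for $k\ge j\ge 1$, this yields $c\,k^{j}$. The argument will in fact give the sharper order $k^{j}$ without any loss, matching the heuristic remark after (\ref{reduction}).

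To prove the reduced bound we write
$$
t^{-j/2}\varphi_n^{(\beta)}(t)=\sqrt{\tfrac{n!}{\Gamma(n+\beta+1)}}\;t^{(\beta-j)/2}e^{-t/2}L_n^{(\beta)}(t)=\sqrt{\tfrac{n!}{\Gamma(n+\beta+1)}}\;t^{\alpha/2}e^{-t/2}L_n^{(\alpha+j)}(t).
$$
This is a Laguerre polynomial of parameter $\alpha+j$ weighted by a lower power $t^{\alpha/2}$. We then use the classical uniform estimates for Laguerre functions, as in Szeg\H{o} ch.~8 or Muckenhoupt \cite{Muck70_2}, with $\nu:=4n+2\beta+2$. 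They give $|\varphi_n^{(\beta)}(t)|\preceq (t\nu)^{\beta/2}$ on $(0,1/\nu]$ and $|\varphi_n^{(\beta)}(t)|\preceq (t\nu)^{-1/4}$ on $[1/\nu,\nu/2]$. On $[\nu/2,3\nu/2]$ the bound is $\nu^{-1/4}(\nu^{1/3}+|t-\nu|)^{-1/4}$, and for $t\ge 3\nu/2$ it is $e^{-\gamma t}$.

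We multiply each bound by $t^{-j/2}$ and take suprema over the four ranges.
\begin{itemize}
\item On $(0,1/\nu]$: $t^{-j/2}(t\nu)^{\beta/2}=\nu^{j/2}(t\nu)^{\alpha/2}\le\nu^{j/2}$, since $\beta-j=\alpha\ge0$.
\item On $[1/\nu,\nu/2]$: $t^{-j/2}(t\nu)^{-1/4}$ is decreasing in $t$, so its maximum is at $t=1/\nu$ and equals $\nu^{j/2}$.
\item On $[\nu/2,\infty)$: the factor $t^{-j/2}$ is at most $c\nu^{-j/2}$, so the contribution is bounded.
\end{itemize}
Since $\nu\asymp n+1$, all three ranges give at most $c\,(n+1)^{j/2}$, which proves the reduced bound and the lemma.

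The main obstacle is importing the uniform Laguerre estimates correctly. In particular, the small-$t$ estimate near the origin must hold with constants uniform in $n$ for the fixed parameter $\beta=\alpha+j$. If we prefer to avoid the full Muckenhoupt-type bounds, there is a crude fallback that still gives the stated $k^{j}$. We use $|L_n^{(\beta)}(t)|\le \binom{n+\beta}{n}e^{t/2}$, valid for $\beta\ge0$ (Szeg\H{o} (7.21.3)), to handle $t\le 1/\nu$. For $t\ge 1/\nu$ we use the weaker global bound $\|\varphi_n^{(\beta)}\|_C\le c$, which gives $t^{-j/2}\le\nu^{j/2}$. This also yields $c\,k^{j/2}\cdot k^{j/2}=c\,k^j$.
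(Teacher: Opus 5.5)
Your proof is correct and follows the paper's argument. The steps match: the reduction identity with $j$ in place of $r$, the choice $\nu=4n+2\beta+2$, a split at $t=1/\nu$ where the near-origin bound $(t\nu)^{\beta/2}$ gives $(t\nu)^{\alpha/2}\nu^{j/2}\le\nu^{j/2}$, and for $t>1/\nu$ uniform boundedness of $\varphi_n^{(\beta)}$ combined with $t^{-j/2}\le\nu^{j/2}$. The paper uses only this two-piece consequence of Muckenhoupt's estimate, which is essentially your ``fallback'', so the finer decay on the middle and outer ranges in your main version is correct but not needed.
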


\textit{Proof.} The derivation of (\ref{diff_formula}) and (\ref{reduction}) uses nothing
but the integrality of the order of differentiation, so both identities hold with $j$ in place of $r$, and
below they are applied in this form. Put $m=k-j$, $\beta=\alpha+j$ and $\nu=4m+2\beta+2$, so that
$\nu\le c\,k$. From the pointwise estimate \cite[(2.5), p.\,435]{Muck70_2}, valid for
$\beta\ge0$ and all $m\ge0$, it follows that
$$
|\varphi_m^{(\beta)}(t)|\le C\,(t\nu)^{\beta/2}\ \text{for } 0<t\le1/\nu ,
\qquad
|\varphi_m^{(\beta)}(t)|\le C\ \text{for } t>1/\nu ,
$$
where $C$ does not depend on $t$ and $m$. For $0<t\le1/\nu$ this gives
$$t^{-j/2}|\varphi_m^{(\beta)}(t)|\le C\,t^{\alpha/2}\nu^{\beta/2}\le C\,\nu^{j/2},$$
while for
$t>1/\nu$ we have
$$t^{-j/2}|\varphi_m^{(\beta)}(t)|\le C\,\nu^{j/2}.$$
Hence,
$$\sup_{t>0}t^{-j/2}|\varphi_m^{(\beta)}(t)|\le c\,k^{j/2},$$
and (\ref{reduction}) together with $k!/(k-j)!\le k^{j}$ gives the statement of the lemma.\\
$\Box$

\vskip 2mm

\begin{remark}\label{rem_scale_known}\rm
An estimate of the form $\|(L_k^{(\alpha)})^{(j)}\|_\infty \le c\, k^{j}$ was obtained earlier in
\cite[lemma 1]{Satake} and \cite[(3.4)]{Plewa} under the restriction $j\le[\alpha/2]$; lemma
\ref{lemma_scale} does not require this restriction. The order $k^{j}$ is sharp: it follows also from
the lower bound \cite[(6.12), p.\,453]{Muck70_2} together with (\ref{reduction}).
\end{remark}

\section{Error estimate}\label{EE}

The condition $\mu>r+1-1/s$ means exactly $\mu+1/s-1-r>0$. Below we shall see that this quantity
is the exponent of accuracy, while $\mu+1/s-1/p$ is the denominator of the balance rule. Let us write
\begin{equation}\label{fullError}
\bigl(f-S_N^\nu \mathbf{f}^\delta\bigr)^{(r)} = \bigl(f-S_N f\bigr)^{(r)}+
\bigl(S_N f-S_N^{\nu}f\bigr)^{(r)}+\bigl(S_N^{\nu} f-S_N^{\nu} \mathbf{f}^\delta\bigr)^{(r)} ,
\end{equation}
and estimate in the metric $C$ each of the three differences multiplied by $\sqrt{w}$.

\vskip 2mm

\begin{lemma}\label{lemma_BoundT1}
Let $f\in W^\mu_{s}$, $1\leq s \le \infty$, $\mu>r+1-1/s$. Then for the ordinary
Fourier--Laguerre sum and every $N\ge r$
$$
\bigl\|(f-S_N f)^{(r)}\sqrt{w}\bigr\|_{C}\leq c\,\|f\|_{s,\mu}\, N^{-(\mu+1/s-1-r)} .
$$
\end{lemma}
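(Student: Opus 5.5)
We expand the tail termwise. Since $f\in L_{2,w}$, we have $f-S_Nf=\sum_{k>N}\langle f,\ell_k^{(\alpha)}\rangle\ell_k^{(\alpha)}$ in $L_{2,w}$. We first show that this series, differentiated $r$ times and multiplied by $\sqrt{w}$, converges absolutely and uniformly on $(0,\infty)$. By Lemma \ref{lemma_scale} with $j=r$, each term satisfies
$$
\bigl|\langle f,\ell_k^{(\alpha)}\rangle\bigr|\,\bigl\|(\ell_k^{(\alpha)})^{(r)}\sqrt{w}\bigr\|_C\le c\,k^{r}\,\bigl|\langle f,\ell_k^{(\alpha)}\rangle\bigr| ,
\qquad k>N\ge r .
$$
The same lemma with $j=1,\dots,r$ gives uniform convergence, on every compact subinterval of $(0,\infty)$, of the series for $f$ and for all its derivatives up to order $r$. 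On compacts $\sqrt{w}$ is bounded away from zero, so these series converge uniformly there. The classical theorem on termwise differentiation of uniformly convergent series then shows that the $L_{2,w}$-sum $f$ (after modification on a null set) is $C^r$ on $(0,\infty)$. It also shows that
$$
(f-S_Nf)^{(r)}=\sum_{k>N}\langle f,\ell_k^{(\alpha)}\rangle(\ell_k^{(\alpha)})^{(r)}
$$
pointwise. Therefore
$$
\bigl\|(f-S_Nf)^{(r)}\sqrt{w}\bigr\|_C\le c\sum_{k>N}k^{r}\bigl|\langle f,\ell_k^{(\alpha)}\rangle\bigr| ,
$$
and the rest reduces to estimating this sum.

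For $1<s<\infty$ we write $k^r|\langle f,\ell_k^{(\alpha)}\rangle|=k^{\mu}|\langle f,\ell_k^{(\alpha)}\rangle|\cdot k^{r-\mu}$ and apply H\"older's inequality with exponents $s$ and $s'=s/(s-1)$:
$$
\sum_{k>N}k^{r}\bigl|\langle f,\ell_k^{(\alpha)}\rangle\bigr|\le \|f\|_{s,\mu}\Bigl(\sum_{k>N}k^{(r-\mu)s'}\Bigr)^{1/s'} .
$$
The condition $\mu>r+1-1/s$ is exactly $(\mu-r)s'>1$, so the inner sum is bounded by $c\,N^{1-(\mu-r)s'}$. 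Raising to the power $1/s'=1-1/s$ gives $N^{-(\mu-r-1+1/s)}$, which is the claimed rate.

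The endpoint cases need separate treatment. For $s=\infty$ we use $|\langle f,\ell_k^{(\alpha)}\rangle|\le\|f\|_{\infty,\mu}k^{-\mu}$, and then $\sum_{k>N}k^{r-\mu}\le cN^{r+1-\mu}$ because $\mu>r+1$. For $s=1$ we use $k^{r}=k^{\mu}k^{r-\mu}\le k^{\mu}N^{r-\mu}$ for $k>N$, valid since $\mu>r$, which gives the bound $\|f\|_{1,\mu}N^{-(\mu-r)}$, matching the exponent $\mu+1-1-r$.

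The main obstacle is the justification step rather than the computation: we must make sure that $f^{(r)}$ exists on $(0,\infty)$ and that the $L_{2,w}$ expansion may be differentiated termwise. I would handle this, as above, by local uniform convergence of the differentiated series combined with Lemma \ref{lemma_scale}. The uniform bound in $t$ on $\sqrt{w}$ times the tail then follows from the Weierstrass M-test, with the majorant $\sum_{k>N}k^r|\langle f,\ell_k^{(\alpha)}\rangle|$ being finite by the computation above.
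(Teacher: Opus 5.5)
Your proposal is correct and follows the paper's proof essentially step for step: local uniform convergence of the differentiated series via Lemma \ref{lemma_scale} and the Weierstrass test on compacts where $\sqrt{w}$ is bounded below, identification of the limit with $f$ almost everywhere, the majorant $c\sum_{k>N}k^{r}|\langle f,\ell_k^{(\alpha)}\rangle|$, and H\"older's inequality with the endpoint cases $s=1$ and $s=\infty$ treated separately. One small slip: Lemma \ref{lemma_scale} covers only $j\ge1$, so it does not give convergence of the undifferentiated series, which the termwise-differentiation theorem needs at least at one point; the paper supplies this with the $j=0$ bound $\|\varphi_k^{(\alpha)}\|_C\le C$ from Muckenhoupt's estimate, and you could alternatively get it from the $L_{2,w}$ convergence of $S_nf$ together with the uniform convergence of $(S_nf)'$ on compacts.
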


\textit{Proof.} We first show that the Fourier--Laguerre series of a function $f$ can be
differentiated termwise $r$ times. For $1\le j\le r$ and $k\ge j$, by lemma \ref{lemma_scale},
$\|(\ell^{(\alpha)}_k)^{(j)}\sqrt{w}\|_C\le c\,k^{j}\le c\,\underline{k}^{\,r}$ (the terms with $k<j$
are zero by lemma \ref{lemma_diff}), and for $j=0$
$\|\varphi^{(\alpha)}_k\|_C\le C$ by \cite[(2.5)]{Muck70_2}. Therefore each of the series
$\sum_k\langle f,\ell^{(\alpha)}_k\rangle(\ell^{(\alpha)}_k)^{(j)}\sqrt{w}$, $0\le j\le r$,
is majorized by the numerical series $c\sum_k\underline{k}^{\,r}|\langle f,\ell^{(\alpha)}_k\rangle|$,
which for $\mu>r+1-1/s$ converges by the H\"{o}lder inequality (the same computation as below). By
the Weierstrass test the series $\sum_k\langle f,\ell^{(\alpha)}_k\rangle(\ell^{(\alpha)}_k)^{(j)}$
converge uniformly on every interval $[a,b]\subset(0,\infty)$, where $\sqrt{w}$ is bounded away from
zero. Therefore the sum of the series $\sum_k\langle f,\ell^{(\alpha)}_k\rangle\ell^{(\alpha)}_k$ is a function
of class $C^{r}$ on $(0,\infty)$; and since the system $\{\ell^{(\alpha)}_k\}$ is complete in $L_{2,w}$
\cite[theorem 5.7.1]{Szego}, the partial sums $S_N f$ converge to $f$ in $L_{2,w}$, and the limit coincides with $f$
almost everywhere. Hence $f$ has on $(0,\infty)$ a representative of class $C^{r}$,
$f^{(r)}=\sum_{k\ge r}\langle f,\ell^{(\alpha)}_k\rangle(\ell^{(\alpha)}_k)^{(r)}$, and by lemma
\ref{lemma_scale}
$$
\bigl\|(f-S_N f)^{(r)} \sqrt{w}\bigr\|_C
= \Bigl\|\sum_{k>N} \langle f, \ell^{(\alpha)}_{k}\rangle
\bigl(\ell^{(\alpha)}_{k}\bigr)^{(r)}\sqrt{w}\Bigr\|_C
\le c \sum_{k>N} k^{r}\,|\langle f, \ell^{(\alpha)}_{k}\rangle | .
$$
For $1<s<\infty$ the H\"{o}lder inequality and the definition of $W^\mu_s$ give
$$
\sum_{k>N} k^{r}|\langle f, \ell^{(\alpha)}_{k}\rangle |
\le \Bigl(\sum_{k>N} k^{\mu s}|\langle f, \ell^{(\alpha)}_{k}\rangle |^s\Bigr)^{1/s}
\Bigl(\sum_{k>N} k^{(r-\mu)s/(s-1)}\Bigr)^{(s-1)/s} ,
$$
$$
\Bigl(\sum_{k>N} k^{(r-\mu)s/(s-1)}\Bigr)^{(s-1)/s}
\le \frac{N^{r-\mu+(s-1)/s}}{\bigl((\mu-r)s/(s-1)-1\bigr)^{(s-1)/s}} ,
$$
and moreover the last series converges, since $(\mu-r)s/(s-1)>1$.
For $s=1$ one uses the estimate
$\sum_{k>N} k^{r-\mu}|k^{\mu}\langle f,\ell^{(\alpha)}_k\rangle| \le N^{r-\mu}\|f\|_{1,\mu}$,
and for $s=\infty$ --- comparison with the integral
$\sum_{k>N}k^{r-\mu}\le N^{r-\mu+1}/(\mu-r-1)$.\\
$\Box$

\vskip 2mm

\begin{lemma}\label{lemma_BoundT2}
Let $f\in W^\mu_{s}$, $1\leq s \le \infty$, $\mu>0$. Then for every fixed
$\theta>\mu+1/s-1-r$, for any method from $\mathcal{S}^\theta$ and for \emph{every} $N\ge r$
$$
\bigl\|(S_N f-S^{\nu}_N f)^{(r)}\sqrt{w}\bigr\|_{C}\leq c\,\|f\|_{s,\mu}\, N^{-(\mu+1/s-1-r)} .
$$
\end{lemma}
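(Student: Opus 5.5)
We bound the difference termwise, exactly as in the proof of lemma \ref{lemma_BoundT1}. Since both $S_Nf$ and $S_N^\nu f$ are polynomials of degree $N$,
$$
(S_N f-S^\nu_N f)^{(r)}\sqrt{w}=\sum_{k=r}^{N}(1-\nu_k^N)\langle f,\ell^{(\alpha)}_k\rangle\bigl(\ell^{(\alpha)}_k\bigr)^{(r)}\sqrt{w} .
$$
The terms with $k<r$ vanish by lemma \ref{lemma_diff}. Lemma \ref{lemma_scale} and condition (\ref{qual}) then give
$$
\bigl\|(S_N f-S^{\nu}_N f)^{(r)}\sqrt{w}\bigr\|_{C}\le c\,C(\nu)\,N^{-\theta}\sum_{k=r}^{N}k^{\theta+r}|\langle f,\ell^{(\alpha)}_k\rangle| .
$$
It remains to show that this finite sum is $\preceq\|f\|_{s,\mu}N^{\theta+r+1-1/s-\mu}$; multiplying by $N^{-\theta}$ then gives exactly $N^{-(\mu+1/s-1-r)}$.

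For $1<s<\infty$ we write $k^{\theta+r}|\langle f,\ell_k^{(\alpha)}\rangle|=k^{\mu}|\langle f,\ell_k^{(\alpha)}\rangle|\cdot k^{\theta+r-\mu}$ and apply H\"older's inequality with exponents $s$ and $s'=s/(s-1)$. This yields the bound
$$
\|f\|_{s,\mu}\Bigl(\sum_{k=1}^{N}k^{(\theta+r-\mu)s'}\Bigr)^{1/s'} .
$$
The hypothesis $\theta>\mu+1/s-1-r$ is equivalent to $(\theta+r-\mu)s'>-1$. Hence the partial sum of this power series grows like $N^{(\theta+r-\mu)s'+1}$ (comparison with an integral, with a constant depending on $\theta,\mu,r,s$). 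Taking the $1/s'$ power gives $N^{\theta+r-\mu+1-1/s}$, as required.

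For $s=1$ the hypothesis reads $\theta+r-\mu>0$. We then simply bound $k^{\theta+r-\mu}\le N^{\theta+r-\mu}$ and obtain $N^{\theta+r-\mu}\|f\|_{1,\mu}$. For $s=\infty$ the hypothesis reads $\theta+r-\mu>-1$. We use $|\langle f,\ell_k^{(\alpha)}\rangle|\le k^{-\mu}\|f\|_{\infty,\mu}$ and $\sum_{k\le N}k^{\theta+r-\mu}\preceq N^{\theta+r-\mu+1}$.

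The only delicate point is the role of the hypothesis $\theta>\mu+1/s-1-r$. It guarantees that the power sum is dominated by its upper end, so it grows like a power of $N$ and not like a constant or a logarithm. Note that for $\mu\le r+1-1/s$ the claimed rate is not a decay, but the argument is unchanged, since we only need $\mu>0$ and the sum is finite. Unlike lemma \ref{lemma_BoundT1}, no convergence of an infinite series is needed here, which is why only $\mu>0$ is assumed.
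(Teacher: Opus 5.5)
Your proposal is correct and follows essentially the same route as the paper. You bound the sum termwise via lemma \ref{lemma_scale} and (\ref{qual}) to get $c\,C(\nu)N^{-\theta}\sum_{k=r}^{N}k^{\theta+r}|\langle f,\ell^{(\alpha)}_k\rangle|$, then use H\"older for $1<s<\infty$ (with $\theta>\mu+1/s-1-r$ used exactly as $(\theta+r-\mu)s/(s-1)>-1$), the supremum bound $k^{\theta+r-\mu}\le N^{\theta+r-\mu}$ for $s=1$, and the power-sum estimate for $s=\infty$, with the same exponent bookkeeping as the paper.
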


\textit{Proof.} On the left-hand side there stands an $r$ times differentiated polynomial of degree
not exceeding $N$, so the sum below is finite and no question of convergence arises. By lemma \ref{lemma_scale} and (\ref{qual})
$$
\begin{aligned}
\bigl\|(S_N f-S^{\nu}_N f)^{(r)}\sqrt{w}\bigr\|_C
&\le \sum_{k=r}^{N}\left|1-\nu_k^N\right|
 |\langle f,\ell^{(\alpha)}_{k}\rangle|
 \bigl\|\bigl(\ell_k^{(\alpha)}\bigr)^{(r)}\sqrt{w}\bigr\|_{C} \\
&\le c\,C(\nu)\,N^{-\theta}\sum_{k=r}^{N}k^{\theta+r}
 |\langle f,\ell^{(\alpha)}_{k}\rangle| .
\end{aligned}
$$
For $1<s<\infty$ the H\"{o}lder inequality gives
$$
\le c\, N^{-\theta} \Bigl(\sum_{k=r}^{N} k^{\mu s}|\langle f, \ell^{(\alpha)}_{k}\rangle |^s\Bigr)^{1/s}
\Bigl(\sum_{k=r}^{N} k^{(\theta+r-\mu)s/(s-1)}\Bigr)^{(s-1)/s}
\le c\,\|f\|_{s,\mu}\, N^{-\theta}\,N^{\theta+r-\mu+(s-1)/s} ,
$$
where at the last step $(\theta+r-\mu)s/(s-1)>-1$ was used. For $s=1$ one uses
$\sup_{r\le k\le N}k^{\theta+r-\mu}=N^{\theta+r-\mu}$, and for $s=\infty$ --- the relation
$\sum_{k=r}^{N}k^{\theta+r-\mu}\le c\,N^{\theta+r-\mu+1}$, valid for $\theta+r-\mu>-1$.\\
$\Box$

\vskip 2mm

\begin{lemma}\label{lemma_BoundT3}
Let $1\leq p\le \infty$ and let (\ref{perturbation2}) hold. Then for any method from
$\mathcal{S}^\theta$, $\theta>0$, and every $N\ge r$
$$
\bigl\|(S^{\nu}_N f-S^{\nu}_N \mathbf{f}^\delta)^{(r)}\sqrt{w}\bigr\|_{C}\leq c\, \delta\, N^{r+1-1/p} .
$$
\end{lemma}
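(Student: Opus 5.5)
The difference $S^{\nu}_N f-S^{\nu}_N \mathbf{f}^\delta$ is, by (\ref{ModVer}), the finite polynomial $-\sum_{k=0}^{N}\nu_k^N\xi_k\,\ell^{(\alpha)}_k$. So no convergence issue arises, and the plan is simply to differentiate termwise, apply lemma \ref{lemma_scale} to each term, and then use the H\"older inequality with the $\ell_p$ bound (\ref{perturbation2}).

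First, the coefficients $\nu_k^N$ are uniformly bounded. By (\ref{qual}) and $k\le N$ we have $|\nu_k^N|\le 1+|1-\nu_k^N|\le 1+C(\nu)(k/N)^\theta\le 1+C(\nu)$, so this constant goes into $c$. Next, the terms with $k<r$ vanish after $r$-fold differentiation by lemma \ref{lemma_diff}. For $k\ge r$, lemma \ref{lemma_scale} with $j=r$ gives $\|(\ell^{(\alpha)}_k)^{(r)}\sqrt{w}\|_C\le c\,k^r$. Applying the triangle inequality in the (finite) sup-quantity therefore yields
$$
\bigl\|(S^{\nu}_N f-S^{\nu}_N \mathbf{f}^\delta)^{(r)}\sqrt{w}\bigr\|_{C}\le c\,(1+C(\nu))\sum_{k=r}^{N}k^{r}|\xi_k| .
$$

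Finally, for $1<p\le\infty$ we apply H\"older with exponents $p$ and $p'=p/(p-1)$:
$$
\sum_{k=r}^{N}k^{r}|\xi_k|\le\|\bxi\|_{\ell_p}\Bigl(\sum_{k=r}^{N}k^{rp'}\Bigr)^{1/p'}\le\delta\,\bigl(N\cdot N^{rp'}\bigr)^{1/p'}=\delta\,N^{r+1-1/p},
$$
since $1/p'=1-1/p$. For $p=1$ we use $\sum_{k=r}^{N}k^r|\xi_k|\le N^r\|\bxi\|_{\ell_1}\le\delta N^{r}$, which matches $N^{r+1-1/p}$ at $p=1$. For $p=\infty$ the bound reads $\delta\sum_{k\le N}k^r\le\delta N^{r+1}$, which is already covered by the general formula.

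None of these steps is a real obstacle. The only points needing care are the uniform bound on $\nu_k^N$, which comes from (\ref{qual}) together with $k\le N$, and the fact that the constant in lemma \ref{lemma_scale} does not depend on $k$. Both are already available. Note that $\theta$ enters only through $C(\nu)$.
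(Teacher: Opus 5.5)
Your proof is correct and follows essentially the same route as the paper's: you bound $|\nu_k^N|\le 1+C(\nu)$ via (\ref{qual}), estimate each term by lemma \ref{lemma_scale}, and finish with the H\"older inequality, treating $p=1$ separately. The only difference is cosmetic: you bound $\sum_{k=r}^{N}k^{rp'}$ crudely by $N\cdot N^{rp'}$ (valid since there are $N-r+1\le N$ terms), where the paper uses $\sum_{k\le N}k^{\lambda}\le\frac{\lambda+2}{\lambda+1}N^{\lambda+1}$; your bound is equally valid and gives constant $1$.
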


\textit{Proof.} By virtue of (\ref{qual}) the array $\nu$ is uniformly bounded,
$|\nu^N_k|\le 1+C(\nu)$, so that by lemma \ref{lemma_scale}
$$
\bigl\|(S^{\nu}_N f-S^{\nu}_N \mathbf{f}^\delta)^{(r)} \sqrt{w}\bigr\|_C
\le c\,(1+C(\nu)) \sum_{k=r}^{N} k^{r}|\xi_k| .
$$
For $1<p<\infty$ from the H\"{o}lder inequality we obtain
$$
\sum_{k\le N} k^{r}|\xi_k| \le \|\bxi\|_{\ell_p}
\Bigl(\sum_{k\le N} k^{rp/(p-1)}\Bigr)^{(p-1)/p}
\le c\, \delta\; N^{\,r+1-1/p} ,
$$
where at the last step there was used $\sum_{k\le N}k^{\lambda}\le\frac{\lambda+2}{\lambda+1}
N^{\lambda+1}$, valid for all $\lambda>-1$ and $N\ge1$, --- the same inequality on a partial
sum as in the proof of lemma \ref{lemma_BoundT2}.
For $p=1$ the same estimate follows immediately from
$\sum_{k\le N}k^{r}|\xi_k|\le N^{r}\|\bxi\|_{\ell_1}\le\delta N^{r}$, and
for $p=\infty$ --- from
$\sum_{k\le N}k^{r}|\xi_k|\le\|\bxi\|_{\ell_\infty}\sum_{k\le N}k^{r}\le c\,\delta N^{r+1}$.\\
$\Box$

\vskip 2mm

\begin{remark}\label{rem_noise_sharp}\rm
The estimate of lemma \ref{lemma_BoundT3} is sharp in order already for the Fourier method $\nu^N_k\equiv1$.
Indeed, let $r\ge1$, $N\ge4r$, $m=\lfloor(N-r)/3\rfloor$, $\Omega\subset[m+r,3m+r]$,
$\card(\Omega)=m$, and let $\xi_k=\delta\,m^{-1/p}$ for $k\in\Omega$ and $\xi_k=0$ for $k\notin\Omega$;
then $\|\bxi\|_{\ell_p}=\delta$, and by lemma \ref{lemma_BE} of section \ref{sharp} we have
$\bigl\|\sum_{k=0}^{N}\xi_k(\ell^{(\alpha)}_k)^{(r)}\sqrt{w}\bigr\|_C\ge\bar c\,\delta\,m^{\,r+1-1/p}
\ge c\,\delta\,N^{\,r+1-1/p}$.
\end{remark}

\vskip 2mm

\begin{theorem}\label{Th_up}
Let $f\in W^\mu_{s}$, $\|f\|_{s,\mu}\le1$, $1\leq s \le \infty$, $\mu>r+1-1/s$, and let
(\ref{perturbation2}) hold for some $1\le p\le \infty$. Then for every fixed
$\theta>\mu+1/s-1-r$, for any method $S^{\nu}_N$ from $\mathcal{S}^\theta$, every $0<\delta<1$ and
every integer truncation level $N\ge r$ subject to the relation
$$
N \asymp \delta^{-1/(\mu+1/s-1/p)} ,
$$
there holds the estimate
$$
\bigl\|(f-S^{\nu}_N \mathbf{f}^\delta)^{(r)} \sqrt{w}\bigr\|_{C} \leq c\;
\delta^{\,(\mu+1/s-1-r)/(\mu+1/s-1/p)}\, .
$$
\end{theorem}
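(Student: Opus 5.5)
The plan is to combine the three lemmas of this section through the splitting (\ref{fullError}). Multiplying each term of (\ref{fullError}) by $\sqrt{w}$ and using the triangle inequality for $\|\cdot\|_C$, the left-hand side is bounded by the sum of the three quantities estimated in lemmas \ref{lemma_BoundT1}, \ref{lemma_BoundT2} and \ref{lemma_BoundT3}. The first two finiteness issues are already settled inside those lemmas: lemma \ref{lemma_BoundT1} shows that $f$ has a $C^r$ representative with termwise differentiable series, and the other two terms are finite sums.

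All hypotheses are met for $N\ge r$. Lemma \ref{lemma_BoundT1} needs $\mu>r+1-1/s$, which is assumed. Lemma \ref{lemma_BoundT2} needs $\theta>\mu+1/s-1-r$, which is also assumed, and we have $\|f\|_{s,\mu}\le1$. Lemma \ref{lemma_BoundT3} holds for every method in $\mathcal{S}^\theta$. Together they give
$$
\bigl\|(f-S^{\nu}_N \mathbf{f}^\delta)^{(r)} \sqrt{w}\bigr\|_{C}\le c\,N^{-(\mu+1/s-1-r)}+c\,\delta\,N^{r+1-1/p}.
$$

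Next substitute the truncation rule. From $a_1\delta^{-1/(\mu+1/s-1/p)}\le N$ and the positivity of $\mu+1/s-1-r$, the first term is at most $c\,a_1^{-(\mu+1/s-1-r)}\delta^{(\mu+1/s-1-r)/(\mu+1/s-1/p)}$. For the second term, the exponent $r+1-1/p$ is nonnegative, since $r\ge1$ gives $r+1-1/p\ge r\ge1$. So $N\le a_2\delta^{-1/(\mu+1/s-1/p)}$ yields
$$
\delta N^{r+1-1/p}\le a_2^{r+1-1/p}\,\delta^{1-(r+1-1/p)/(\mu+1/s-1/p)}=a_2^{r+1-1/p}\,\delta^{(\mu+1/s-1-r)/(\mu+1/s-1/p)}.
$$
The exponent identity $(\mu+1/s-1/p)-(r+1-1/p)=\mu+1/s-1-r$ is exactly the balance between the two terms.

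Two small points remain. First, the denominator $\mu+1/s-1/p$ is positive, because it exceeds $\mu+1/s-1-r>0$. Second, the constants $a_1,a_2$ must be absorbed into $c$; they are fixed and independent of $\delta$, so this is consistent with the paper's convention on $c$. There is no real obstacle: the only thing to watch is the direction of each inequality for $N$ (the lower bound controls the bias terms, the upper bound controls the noise term) and the signs of the exponents.
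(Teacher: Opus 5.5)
Your proposal is correct and follows the paper's proof: you use the splitting (\ref{fullError}) and lemmas \ref{lemma_BoundT1}--\ref{lemma_BoundT3} to get the majorant $c\,N^{-(\mu+1/s-1-r)}+c\,\delta N^{r+1-1/p}$, and then balance it with the rule $N\asymp\delta^{-1/(\mu+1/s-1/p)}$. The only difference is that you spell out the balancing step the paper leaves implicit, correctly using $N\ge a_1\delta^{-1/(\mu+1/s-1/p)}$ for the two bias terms and $N\le a_2\delta^{-1/(\mu+1/s-1/p)}$ for the noise term.
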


\textit{Proof.} The condition $N\ge r$ is exactly the domain of applicability of lemmas
\ref{lemma_BoundT1}, \ref{lemma_BoundT2} and \ref{lemma_BoundT3}, so all three are applicable, and
(\ref{fullError}) gives
$$
\bigl\|(f-S^{\nu}_N \mathbf{f}^\delta)^{(r)} \sqrt{w}\bigr\|_{C}
\le c\, N^{-(\mu+1/s-1-r)} + c\, \delta N^{\,r+1-1/p}
= c\, N^{\,r+1}\bigl(N^{-\mu-1/s}+\delta N^{-1/p}\bigr) .
$$
Both terms on the right-hand side of the last relation
are balanced for $N\asymp\delta^{-1/(\mu+1/s-1/p)}$.
This is what had to be proved.\\
$\Box$

\vskip 2mm

\begin{corollary}\label{Cor1}
Let $\alpha\ge0$, $1\le s\le\infty$, $1\le p\le\infty$, $\mu>r+1-1/s$ and
$\theta>\mu+1/s-1-r$ be fixed. Then every summation method (\ref{ModVer}) from
$\mathcal{S}^\theta$, at every integer truncation level $N\ge r$ with
$N\asymp\delta^{-1/(\mu+1/s-1/p)}$ and for every
$0<\delta<1$, recovers the $r$-th derivative of an arbitrary function
$f\in W^{\mu}_{s}$ in the weighted uniform metric with accuracy
$$
O\Bigl(\delta^{\frac{\mu+1/s-1-r}{\mu+1/s-1/p}}\Bigr) ,
$$
using
$$
\card(\{r,\ldots,N\}) \asymp N \asymp \delta^{-\frac{1}{\mu+1/s-1/p}}
$$
perturbed Fourier--Laguerre coefficients. The constant in $O(\cdot)$ is the constant $c$
of theorem \ref{Th_up}: it depends on $\alpha$, $r$, $\mu$, $s$, $p$, $\theta$, $C(\nu)$ and on
the constants of the relation $N\asymp\delta^{-1/(\mu+1/s-1/p)}$, but not on $f$, not on $\delta$ and not on $N$.
\end{corollary}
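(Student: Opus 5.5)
The corollary is a repackaging of Theorem \ref{Th_up} together with a count of the coefficients the method actually uses. I would therefore prove it in two parts: the accuracy statement and the information count.

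\emph{Accuracy.} Fix a method from $\mathcal{S}^\theta$ with $\theta>\mu+1/s-1-r$, and an integer $N\ge r$ with $a_1\delta^{-1/(\mu+1/s-1/p)}\le N\le a_2\delta^{-1/(\mu+1/s-1/p)}$.

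For $f$ in the unit ball of $W^\mu_s$, Theorem \ref{Th_up} gives the bound directly. For an arbitrary $f\in W^\mu_s$ I would not rescale, since the noise level $\delta$ does not scale with $f$. Instead I would return to the decomposition (\ref{fullError}) and apply Lemmas \ref{lemma_BoundT1}, \ref{lemma_BoundT2} and \ref{lemma_BoundT3} directly. This gives the bound
$$
c\,\|f\|_{s,\mu}N^{-(\mu+1/s-1-r)}+c\,\delta N^{r+1-1/p}.
$$
Substituting the two-sided bounds on $N$ turns both terms into $\delta^{(\mu+1/s-1-r)/(\mu+1/s-1/p)}$, up to the factors $a_1^{-(\mu+1/s-1-r)}$ and $a_2^{r+1-1/p}$. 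The first check is that the two powers of $\delta$ agree. The second term has exponent $1-(r+1-1/p)/(\mu+1/s-1/p)$, which equals $(\mu+1/s-1-r)/(\mu+1/s-1/p)$ after simplification. This confirms the balance.

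The resulting $O$-constant depends on $\alpha,r,\mu,s,p,\theta,C(\nu),a_1,a_2$, and on $\|f\|_{s,\mu}$ for non-normalised $f$. It depends neither on $\delta$ nor on $N$. This is the sense in which "arbitrary $f$" should be read: either $f$ lies in the class (its unit ball), as the paper's convention suggests, or the factor $\|f\|_{s,\mu}$ is absorbed into the constant.

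\emph{Information count.} Formula (\ref{ModVer}) uses only $f^\delta_0,\dots,f^\delta_N$. After $r$-fold differentiation, Lemma \ref{lemma_diff} kills the terms with $k<r$, so only $f^\delta_k$ with $r\le k\le N$ enter. Hence the number of coefficients used is $\card(\{r,\ldots,N\})=N-r+1$.

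Since $N\ge r$, we have $N-r+1\le N$ and $N-r+1\ge N/r$, so $N-r+1\asymp N$. The latter is clear for $r=1$. For $r\ge2$ it is equivalent to $(r-1)(N-r)\ge0$, which holds because $N\ge r$.

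Finally, $N\asymp\delta^{-1/(\mu+1/s-1/p)}$ by assumption. The exponent $\mu+1/s-1/p$ is positive, because $\mu>r+1-1/s$ gives $\mu+1/s-1/p>r+1-1/p>0$. Consequently the required $N$ exists and is $\ge r$ for small $\delta$.

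\emph{Main obstacle.} There is no real analytic difficulty here. The only point needing care is the bookkeeping that confirms the constants are independent of $N$, $\delta$ and $f$, together with the nonnormalised-$f$ issue above.
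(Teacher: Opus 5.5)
Your proposal is correct and follows the paper's route: the accuracy claim is Theorem \ref{Th_up}, i.e.\ the three lemmas of Section \ref{EE} plus balancing. Under the paper's convention, $W^\mu_s$ there denotes the unit ball, so your non-normalised variant is a harmless extra. The count $\card(\{r,\ldots,N\})=N-r+1\asymp N$ comes from Lemma \ref{lemma_diff}, exactly as in the proof of Theorem \ref{Th_R_order}. Your closing remark that a suitable $N$ exists only ``for small $\delta$'' is unnecessary, since the statement is conditional on such an $N$; in fact $N(\delta)=\max\{r,\lceil\delta^{-1/(\mu+1/s-1/p)}\rceil\}$ works for every $0<\delta<1$, as shown in the proof of Corollary \ref{Cor_opt}.
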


\vskip 2mm

\begin{remark}\label{rem_upper_known}\rm
The scheme of obtaining an accuracy estimate through a two-term majorant of the error,
an estimate of the noise contribution and the balance rule was known earlier, (see, for example, \cite{Mathe&Per}, \cite{Sol_Stas_UMZ2022}).
Lemma \ref{lemma_BoundT3} is a general estimate of noise propagation for an orthonormal system
satisfying $\|\varphi_k\|_C\le c\,k^{\gamma}$, applied with $\gamma=r$.
What is new here is the exponent with which lemma \ref{lemma_BoundT3} is applied. Each
differentiation of a Chebyshev or Legendre polynomial costs a factor $k^{2}$, whereas by
lemma \ref{lemma_scale} the weighted Laguerre system loses, per single differentiation, only
the factor $k$. Therefore in the numerator of the exponent of accuracy of theorem \ref{Th_up} there stands $-r$ where
in the exponents of the works \cite{Sem_Sol_2026} and \cite{Kys} there stands $-2r$.
\end{remark}

\vskip 2mm

\begin{remark}\label{rem_level}\rm
Differentiation spoils the accuracy and does not change the number of coefficients $N$
that has to be taken: one and the same truncated sum recovers both the function
and all its derivatives up to the order allowed by the class.
The noted effect is especially useful in composing algorithms for solving problems
that operate with derivatives of different orders. In this case there arises no problem of choosing
a compromise $N$, which could lead to a loss of the optimal order of accuracy.
\end{remark}

\vskip 2mm

\section{Optimal recovery and information complexity. Problem setting}\label{opt}

Our next goal is to show that the methods (\ref{ModVer}) from $\mathcal{S}^\theta$ attain
order-optimal accuracy estimates in the recovery of $f^{(r)}$, while for $p=2$ this estimate is
improved by no mapping whatsoever.

By the \emph{information operator} $G:L_{2,w}\to\ell_2$ we mean the operator assigning to a
function $f$ the sequence of its Fourier--Laguerre coefficients, $Gf=\mathbf{f}$; such data
are called Galerkin information (see, for example, \cite{Wer87}). The operator $G$ describes
exact information, whereas the available data are perturbed: $\mathbf{f}^\delta=Gf+\bxi$, where
$\|\bxi\|_{\ell_p}\le\delta$. Since $\{\ell^{(\alpha)}_k\}$ is an
orthonormal basis of $L_{2,w}$, the exact data (the image of the operator $G$)
run over the whole of $\ell_2$, while the perturbed ones lie in the space
$\ell_2+\ell_p=\ell_{\max\{p,2\}}$, which is what we take as the domain of the recovery
operator. For $p=\infty$ not all of its elements are attainable --- for example, the constant
sequence $y_k\equiv1$ is not representable in the form $Gf+\bxi$ for $\delta<1$; this does not
affect the quantities studied below, since the error of a method reads the recovery operator
only on attainable data, and on the remaining ones it may be defined arbitrarily.

By a \emph{recovery operator} we mean an arbitrary mapping
$\psi:\ell_{\max\{p,2\}}\to C$, and by an \emph{approximation method} --- an arbitrary operator
$\psi G$, understood as giving an approximation to $f^{(r)}$ and not to $f$. The set of all such
operators is denoted by $\Psi$; of a recovery operator neither linearity,
nor continuity, nor stability is required. The error of a method on the class is
$$
e_{\delta}\bigl(W_{s}^\mu, \psi G, C, \ell_p\bigr) =
\sup_{f \in W_{s}^\mu}\ \sup_{\|\bxi\|_{\ell_p} \leq \delta}\
\bigl\| \bigl(f^{(r)} - \psi (G f + \bxi)\bigr) \sqrt{w} \bigr\|_{C} ,
$$
and we study the quantity
$$
\mathcal{E}^r_\delta \bigl(W_{s}^\mu, \Psi, C, \ell_p\bigr) =
\inf_{\psi G \in \Psi} e_{\delta}\bigl(W_{s}^\mu, \psi G, C, \ell_p\bigr) .
$$
For a finite $\Omega\subset\mathbb{N}_0$ we put $G_\Omega:=P_\Omega G$, where $P_\Omega$ is the
coordinate projection retaining the elements with indices from $\Omega$; by a \emph{numerical
differentiation algorithm} we mean an arbitrary operator $\psi G_\Omega$, and the set
of all such operators with $\card(\Omega)\le N$ --- by $\Psi_N\subset\Psi$.
The \emph{minimal radius of Galerkin information} is
$$
R^r_{N,\delta}\bigl(W^{\mu}_{s}, \Psi_N, C, \ell_p\bigr) =
\inf_{\psi G_\Omega \in \Psi_N} e_\delta\bigl(W^{\mu}_{s}, \psi G_\Omega, C, \ell_p\bigr) ,
$$
moreover the inclusion $\Psi_N\subset\Psi$ (to the algorithm $\psi G_\Omega$ there corresponds the recovery operator
$\psi\circ P_\Omega$, whose error is no larger) gives $\mathcal{E}^r_\delta \le R^r_{N,\delta}$ for every $N$
and every $\delta>0$ --- and at once for all $1\le p\le\infty$: the parameter $p$ does not enter
this inclusion, while it enters both quantities in the same way, both the data space and the constraint on the noise
(\ref{perturbation2}).

In addition, we are interested in the smallest amount of information that is already sufficient for
order-optimal recovery. Let us fix once and for all a constant $K>0$ and put
\begin{equation}\label{Nmin}
N_{\min}=N_{\min}(\delta)=\min\Bigl\{N\in\mathbb{N}:\
R^r_{N,\delta}\bigl(W^{\mu}_{s},\Psi_N,C,\ell_p\bigr)\le K\,\delta^{\,(\mu+1/s-1-r)/(\mu+1/s-1/p)}\Bigr\} ,
\end{equation}
that is, the smallest $N$ at which numerical differentiation reaches --- up to the
constant $K$ --- the accuracy $\delta^{(\mu+1/s-1-r)/(\mu+1/s-1/p)}$ of corollary \ref{Cor1}, order-optimal by
virtue of theorem \ref{Th_R_order} of section \ref{sharp} for all $0<\delta<1$. The constant $K$ is taken no smaller than the constant $C_0$
of corollary \ref{Cor_Nrange}{\rm (ii)} with $c_0=1$; in theorem \ref{Th_Nmin} it is shown that then
the set on the right-hand side is nonempty for every $0<\delta<1$. The quantity
$N_{\min}$ is called the \emph{information complexity} of numerical differentiation.
The closest predecessor of this quantity known to us is \cite{MagOs}, where for optimal
recovery of functions and their derivatives on compact manifolds from Fourier coefficients
given with an error the smallest number of the first Fourier coefficients needed for the most
precise recovery is found exactly.

It remains to bring into consideration one more minimax quantity.
Unlike all the quantities given above, which operate with methods
using the \emph{coefficients}, it has as its input the perturbed function itself,
while the recovering mapping is arbitrary, --- for $p=2$ it coincides with $\mathcal{E}^r_\delta$.
Let us prove this fact in full: for from this it follows that the order of optimal
recovery found below is not improved by any mapping whatsoever, and not only by methods of the kind considered.

Denote by $\mathcal{M}$ the set of \emph{all} mappings $\varphi:L_{2,w}\to C$ ---
with no requirements of linearity, continuity or stability, --- understood as giving
an approximation to $f^{(r)}$, and put
\begin{equation}\label{Edelta}
E^r_{\delta}\bigl(W^{\mu}_{s},\mathcal{M},C\bigr) =
\inf_{\varphi\in\mathcal{M}}\ \sup_{f\in W^{\mu}_{s}}\
\sup_{\substack{h\in L_{2,w}\\ \|f-h\|_{2}\le\delta}}\
\bigl\|\bigl(f^{(r)}-\varphi(h)\bigr)\sqrt{w}\bigr\|_{C} .
\end{equation}
The derivative stands here on $f$ \emph{before} the optimization: for $r\ge1$ the operator $d^{r}/dt^{r}$
is unbounded from $L_{2,w}$ into $C$, so that the smallness of $\|f-\varphi(h)\|_2$ by itself gives
no estimate whatever of the error after differentiation.

\begin{proposition}\label{prop_MPsi}
Let $p=2$ and $\mu>r+1-1/s$. Then the mapping $J:\psi\mapsto\varphi:=\psi\circ G$ is a bijection of the set
of recovery operators defining $\Psi$ onto $\mathcal{M}$, and it preserves the error:
for every $\psi$ and every $\delta>0$
$$
e_{\delta}\bigl(W^{\mu}_{s},\psi G,C,\ell_2\bigr)
= \sup_{f\in W^{\mu}_{s}}\ \sup_{\substack{h\in L_{2,w}\\ \|f-h\|_{2}\le\delta}}\
\bigl\|\bigl(f^{(r)}-(J\psi)(h)\bigr)\sqrt{w}\bigr\|_{C} .
$$
In particular,
\begin{equation}\label{ME}
E^r_{\delta}\bigl(W^{\mu}_{s},\mathcal{M},C\bigr)
= \mathcal{E}^r_\delta\bigl(W^{\mu}_{s},\Psi,C,\ell_2\bigr) .
\end{equation}
\end{proposition}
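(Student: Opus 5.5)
The plan is to use that, for $p=2$, the information operator $G:L_{2,w}\to\ell_2$ is an isometric isomorphism. The system $\{\ell^{(\alpha)}_k\}$ is an orthonormal basis of $L_{2,w}$ \cite[theorem 5.7.1]{Szego}. So by Parseval $\|Gf\|_{\ell_2}=\|f\|_2$, and by Riesz--Fischer every $\mathbf{y}\in\ell_2$ equals $Gh$ for a unique $h\in L_{2,w}$. For $p=2$ the domain of recovery operators is $\ell_{\max\{p,2\}}=\ell_2$, so a recovery operator is an arbitrary map $\psi:\ell_2\to C$.

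\emph{Bijectivity of $J$.} The map $J\psi=\psi\circ G$ takes $L_{2,w}$ into $C$, so it lies in $\mathcal{M}$. It is injective: if $\psi_1\circ G=\psi_2\circ G$, then $\psi_1=\psi_2$ on $G(L_{2,w})=\ell_2$. It is surjective: given $\varphi\in\mathcal{M}$, put $\psi:=\varphi\circ G^{-1}$, which gives $J\psi=\varphi$. No linearity or continuity is involved, since only set-theoretic bijectivity of $G$ is used.

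\emph{Preservation of the error.} Fix $f\in W^\mu_s$. The hypothesis $\mu>r+1-1/s$ is used here: by the proof of lemma \ref{lemma_BoundT1}, $f$ has a $C^r$ representative, so $f^{(r)}$ is well defined on both sides. I then reparametrize the inner supremum. The perturbations $\bxi$ with $\|\bxi\|_{\ell_2}\le\delta$ correspond bijectively to the $h\in L_{2,w}$ with $\|f-h\|_2\le\delta$, via $h:=f+G^{-1}\bxi$, or equivalently $\bxi=Gh-Gf$. This works because $G$ is linear and isometric, so $\|f-h\|_2=\|\bxi\|_{\ell_2}$. Under this correspondence $\psi(Gf+\bxi)=\psi(Gh)=(J\psi)(h)$, so the two suprema over the same index set agree termwise. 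Taking the supremum over $f$ gives the displayed equality.

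\emph{Equality (\ref{ME}).} It follows by taking the infimum over $\psi$ on the left and over $\varphi=J\psi$ on the right. Since $J$ is onto $\mathcal{M}$, the two infima run over the same set of values.

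The only delicate point I expect is the convention that $\bxi$ is an arbitrary real sequence rather than a coefficient sequence. For $p=2$ this is harmless, because every $\bxi\in\ell_2$ is $G$ of some $L_{2,w}$ function, so all data $Gf+\bxi$ are attainable and the inverse $G^{-1}$ is defined everywhere needed. I will say this explicitly.
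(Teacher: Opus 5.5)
Your proposal is correct and follows the paper's proof essentially step by step. Parseval and Riesz--Fischer make $G$ an isometry of $L_{2,w}$ onto $\ell_2$, $J$ is inverted by $\varphi\mapsto\varphi\circ G^{-1}$, and the substitution $\bxi=Gh-Gf$ identifies the two balls, so the suprema and then the infima coincide; your remark that $\mu>r+1-1/s$ only ensures $f^{(r)}$ is well defined (via the $C^r$ representative from the proof of lemma \ref{lemma_BoundT1}) makes explicit a point the paper leaves implicit.
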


\textit{Proof.} Since $\{\ell^{(\alpha)}_k\}_{k\in\mathbb{N}_0}$ is an orthonormal
basis of $L_{2,w}$, the operator $G$, by the Parseval identity and the Riesz--Fischer theorem, is an isometry
of $L_{2,w}$ onto the whole of $\ell_2$; the inverse operator $G^{-1}$ is an isometry as well. For $p=2$ the domain
of the recovery operator is $\ell_{\max\{p,2\}}=\ell_2$, that is, exactly
the image of $G$, so that $\psi\circ G$ is a mapping $L_{2,w}\to C$, an element of $\mathcal{M}$, while
$\varphi\mapsto\varphi\circ G^{-1}$ is inverse to $J$, since $(\psi\circ G)\circ G^{-1}=\psi$ and
$(\varphi\circ G^{-1})\circ G=\varphi$. Hence $J$ is bijective: no requirements other than
that of being mappings are imposed on their elements either by $\Psi$ or by $\mathcal{M}$.

Let $\varphi=J\psi$ and $f\in W^{\mu}_{s}$. By the isometry, the substitution $\bxi:=Gh-Gf$ maps
the ball $\|f-h\|_{2}\le\delta$ one-to-one onto the ball $\|\bxi\|_{\ell_2}\le\delta$ and gives
$\varphi(h)=\psi(Gh)=\psi(Gf+\bxi)$: the suprema on both sides are taken of one and the same quantity over
one and the same set and are equal for each $f$, and with them --- the suprema over $f\in W^{\mu}_{s}$ as well.
Finally, $J$ carries the set of recovery operators defining $\Psi$ onto the whole of
$\mathcal{M}$ without changing the error, so that the infimum over $\psi G\in\Psi$ and the infimum over
$\varphi\in\mathcal{M}$ are infima of one and the same numerical set; this is precisely (\ref{ME}). \\
$\Box$
\vskip 2mm

Thus we have established that for $p=2$ \emph{no} mapping
of the spaces under consideration --- linear or not, stable or not --- recovers
$f^{(r)}$ with accuracy of a better order than the methods using Galerkin information
as input data.

\section{Sharp orders of optimal recovery and information complexity}\label{sharp}

The lower estimates of this section rest on the following block estimate, which replaces
\cite[lemma 4.1]{SolVol}.

The proof of the lemma involves the modified Bessel function of the first
kind $I_\nu$, defined by the series
\begin{equation}\label{bessel}
I_\nu(z) = \left(\frac z2\right)^{\nu}
\sum_{j=0}^{\infty}\frac{(z^2/4)^{j}}{j!\,\Gamma(\nu+j+1)}
\end{equation}
(\cite[9.6.10]{AbrSt}). For $\nu=0$, by virtue of $\Gamma(j+1)=j!$ there holds
$$
I_0(z)=\sum_{j=0}^{\infty}\frac{(z^2/4)^{j}}{(j!)^{2}} ,
\qquad\text{which is equivalent to}\qquad
\sum_{j=0}^{\infty}\frac{x^{j}}{(j!)^{2}}=I_0\bigl(2\sqrt{x}\bigr) ,
\quad x\ge0 .
$$

\begin{lemma}\label{lemma_BE}
Let $\alpha\ge0$ be real, $N\ge1$ an integer, $1\le r\le N$, and let
$$
f(t)=\sum_{k\in \Omega} \ell^{(\alpha)}_k(t) .
$$
Then at the point $t_*=1/(12N)$, for every integer $k$ with $N+r\le k\le3N+r$, setting $m=k-r$,
$$
(-1)^r\bigl(\ell^{(\alpha)}_k\bigr)^{(r)}(t_*)\sqrt{w(t_*)}
\ \ge\ \bar{c}\; m^{\,r+\alpha/2}\, N^{-\alpha/2}
\ \ge\ \bar{c}\, N^{\,r} ,
\qquad
\bar{c}=\frac{0.7039}{\Gamma(\alpha+r+1)}\, 12^{-\alpha/2} ;
$$
in particular, all the terms of the block at this point have one and the same sign, and for every
$\Omega\subset[N+r,3N+r]$ with $\card(\Omega)=N$
$$
\bigl\|f^{(r)} \sqrt{w}\bigr\|_C \ \ge\ \bar{c}\, N^{\,r+1} .
$$
\end{lemma}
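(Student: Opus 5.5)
The plan is to evaluate each differentiated Laguerre polynomial at the single point $t_*=1/(12N)$ and show that it is positive there after multiplication by $(-1)^r$. By lemma \ref{lemma_diff}, with $m=k-r$ and $\beta=\alpha+r\ge1$, I will write
$$
(-1)^r\bigl(\ell^{(\alpha)}_k\bigr)^{(r)}(t_*)\sqrt{w(t_*)}
=\sqrt{\tfrac{k!}{m!}}\;\Bigl(\tfrac{m!}{\Gamma(m+\beta+1)}\Bigr)^{1/2}L^{(\beta)}_m(t_*)\;t_*^{\alpha/2}e^{-t_*/2}.
$$
Note that only $\sqrt{w}=t^{\alpha/2}e^{-t/2}$ enters here, not the full $t^{\beta/2}$, so the reduction (\ref{reduction}) is used without the $t^{-r/2}$ rewriting. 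Everything then hinges on a lower bound for $L^{(\beta)}_m(t_*)$ at a point where $mt_*\le (3N)/(12N)=1/4$. This holds because $N\le m\le 3N$.

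For the polynomial itself I will use the explicit representation from section \ref{prelim}:
$$
L^{(\beta)}_m(t)=\frac{\Gamma(m+\beta+1)}{m!\,\Gamma(\beta+1)}\sum_{j=0}^m a_j(-t)^j,
\qquad a_j=\frac{m!\,\Gamma(\beta+1)}{(m-j)!\,j!\,\Gamma(j+\beta+1)} .
$$
Here $a_0=1$. Since $m!/(m-j)!\le m^j$ and $\Gamma(j+\beta+1)/\Gamma(\beta+1)\ge j!$ (because $\beta\ge0$), we get $0\le a_j t^j\le (mt)^j/(j!)^2$. Hence
$$
\sum_j a_j(-t_*)^j\ \ge\ 1-\sum_{j\ge1}\frac{(mt_*)^j}{(j!)^2}
=2-I_0\bigl(2\sqrt{mt_*}\bigr)\ \ge\ 2-I_0(1)\approx0.7339 ,
$$
using the Bessel identity recorded before the lemma and the monotonicity of $I_0$. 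Combined with $e^{-t_*/2}\ge e^{-1/24}$, this produces the numerical constant $0.7339\,e^{-1/24}\approx0.7039$. The bound could be sharpened by the alternating-series argument, but this cruder one already gives exactly the stated constant. This is the step I expect to be the heart of the proof. The delicate points are that the bound must be uniform in $m$ and $\beta$, and that the sign must be positive, so that no cancellation occurs later.

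It remains to collect the factors. First, $\sqrt{k!/m!}\ge m^{r/2}$. Next, the normalization gives $(m!/\Gamma(m+\beta+1))^{1/2}\cdot\Gamma(m+\beta+1)/(m!\,\Gamma(\beta+1))=(\Gamma(m+\beta+1)/m!)^{1/2}/\Gamma(\beta+1)$. Then $\Gamma(m+\beta+1)/\Gamma(m+1)\ge m^{\beta}$; I would justify this by Gautschi's inequality or log-convexity, and I need to check it carefully for real $\beta\ge1$. Finally $t_*^{\alpha/2}=12^{-\alpha/2}N^{-\alpha/2}$.

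Altogether this yields $\bar c\,m^{r/2+\beta/2}N^{-\alpha/2}=\bar c\,m^{r+\alpha/2}N^{-\alpha/2}$, and $m\ge N$ gives $\ge\bar c N^r$. Since every term with $k\in\Omega\subset[N+r,3N+r]$ has the same sign at $t_*$, summing the $N$ terms gives $|f^{(r)}(t_*)|\sqrt{w(t_*)}\ge\bar c\,N^{r+1}$, and the sup-norm bound follows.
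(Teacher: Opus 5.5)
Your proposal is correct and is essentially the paper's proof: it uses the same termwise majorization $a_j t_*^j\le (mt_*)^j/(j!)^2$, which gives the bracket $2-I_0(2\sqrt{mt_*})\ge 2-I_0(1)$, and then the same collection of factors $k!/(k-r)!\ge m^r$, $\Gamma(m+\beta+1)/m!\ge m^{\beta}$, $t_*^{\alpha/2}=(12N)^{-\alpha/2}$ and $e^{-t_*/2}\ge e^{-1/24}$. The Gamma-ratio inequality you flagged, which the paper simply asserts, follows at once from log-convexity of $\Gamma$: the chord slope of $\log\Gamma$ over $[m+1,m+1+\beta]$ is at least its chord slope over $[m,m+1]$, which equals $\log m$.
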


\textit{Proof.} The first inequality is an estimate of a \emph{single} term at the point
$t_*=1/(12N)$ --- a point of guaranteed sign, not a point of maximum. On the block
$\Omega\subset[N+r,3N+r]$ the index $m=k-r$ runs over $[N,3N]$, so that $m\,t_*\le1/4$. For
$\beta=\alpha+r\ge0$ and $1\le j\le m$
$$
\binom{m+\beta}{m-j}\Big/\binom{m+\beta}{m}
=\frac{m!}{(m-j)!}\cdot\frac{1}{(\beta+1)_j}\ \le\ \frac{m^{\,j}}{j!} ,
$$
since $m!/(m-j)!\le m^{j}$ and $(\beta+1)_j\ge j!$; hence
$L^{(\beta)}_m(t)\ge\binom{m+\beta}{m}\bigl(2-I_0(2\sqrt{mt})\bigr)$, and for $m\,t\le1/4$ the bracket
is no less than $2-I_0(1)>0$, that is, the discarded alternating tail is majorized by the quantity
$I_0(1)-1$, where $I_0(1)=1.2660658\ldots$ (see\ \cite[p.\ 416]{AbrSt}).
Substitution into (\ref{reduction}) together with $k!/(k-r)!\ge m^{r}$ and
$\Gamma(m+\beta+1)/m!\ge m^{\beta}$ gives the factor $m^{\,r+\alpha/2}$, while
$t_*^{\alpha/2}=(12N)^{-\alpha/2}$ and $e^{-t_*/2}\ge e^{-1/24}$ give the constant $\bar c$,
since $\bigl(2-I_0(1)\bigr)e^{-1/24}=0.70398\ldots>0.7039$. The second inequality
is obtained, in view of $m\ge N$, by summing $N$ terms of one and the same constant sign.
\\
$\Box$

\vskip 2mm

The lower estimate is obtained from lemma \ref{lemma_BE} on a single block, whose length and position
are determined by the noise level and are not chosen freely.

\begin{theorem}\label{Th_low}
Let $\alpha\ge0$ be real, $1\le s\le\infty$, $1\le p\le\infty$ and
$\mu>r+1-1/s$. Then for every $0<\delta<1$
$$
\mathcal{E}^r_\delta\bigl(W^{\mu}_{s}, \Psi, C, \ell_p\bigr)
\ \ge\ c_1\, \delta^{\,(\mu+1/s-1-r)/(\mu+1/s-1/p)} ,
$$
where
$c_1 = \bar{c}\; 2^{-(\mu+1/s-1-r)}\, 3^{-\mu}\, r^{-(\mu+1/s-1-r)}$,
and $\bar{c}$ is the constant of lemma \ref{lemma_BE}.
\end{theorem}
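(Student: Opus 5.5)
The plan is the standard ``two indistinguishable functions'' argument, with the test function taken to be the block of lemma \ref{lemma_BE}. Fix $0<\delta<1$, an integer $n\ge r$ (to be chosen from $\delta$), and the block $\Omega=\{n+r,\ldots,2n+r-1\}\subset[n+r,3n+r]$ with $\card(\Omega)=n$. Put
$$
f_\varepsilon=\varepsilon\sum_{k\in\Omega}\ell^{(\alpha)}_k ,\qquad \varepsilon>0 .
$$
Consider the two admissible pairs $(f_\varepsilon,\bxi)$ with $\bxi=-Gf_\varepsilon$, and $(-f_\varepsilon,\bxi')$ with $\bxi'=Gf_\varepsilon$. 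Both produce the same data $\mathbf{f}^\delta=0$. Hence for any $\psi$ the output $g=\psi(0)$ is common to both, and
$$
\max_{\pm}\bigl\|(\pm f_\varepsilon^{(r)}-g)\sqrt{w}\bigr\|_C\ \ge\ \bigl\|f_\varepsilon^{(r)}\sqrt{w}\bigr\|_C .
$$
Lemma \ref{lemma_BE}, applied with $N=n$ (this is where $n\ge r$ is needed), gives
$$
\bigl\|f_\varepsilon^{(r)}\sqrt{w}\bigr\|_C\ge\bar c\,\varepsilon\, n^{r+1}.
$$
Since $\psi$ is arbitrary, $\mathcal{E}^r_\delta\ge\bar c\,\varepsilon n^{r+1}$ for every admissible pair $(\varepsilon,n)$.

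Admissibility imposes two constraints.
\begin{itemize}
\item The noise constraint: $\|\bxi\|_{\ell_p}=\varepsilon n^{1/p}\le\delta$, read as $n^{1/p}=1$ when $p=\infty$.
\item The class constraint: $\|f_\varepsilon\|_{s,\mu}\le\varepsilon(3n+r)^{\mu}n^{1/s}\le1$, read as $n^{1/s}=1$ when $s=\infty$, since then only the sup of the weighted coefficients matters.
\end{itemize}
Using $3n+r\le 4n\le 3\cdot 2n$ (as $r\le n$), it suffices to require $\varepsilon(6n)^{\mu}n^{1/s}\le1$, or a slightly cruder version adapted to the stated constant $c_1$. I take $\varepsilon=\delta n^{-1/p}$, so the noise constraint holds with equality. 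The class constraint then becomes
$$
\delta\, n^{\mu+1/s-1/p}\le 3^{-\mu}\cdot(\text{const}).
$$
This is satisfiable because $\mu+1/s-1/p>r+1-1/p>0$.

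With this choice the lower bound reads $\bar c\,\delta\,n^{r+1-1/p}$. I then choose $n$ as large as the class constraint allows, namely
$$
n\asymp\delta^{-1/(\mu+1/s-1/p)} .
$$
Substituting gives
$$
\delta\cdot\delta^{-(r+1-1/p)/(\mu+1/s-1/p)}=\delta^{(\mu+1/s-1-r)/(\mu+1/s-1/p)},
$$
which is the claimed order.

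The main obstacle I expect is the integrality and the requirement $n\ge r$ uniformly for all $\delta\in(0,1)$, with an explicit constant. Near $\delta=1$, the quantity $\delta^{-1/(\mu+1/s-1/p)}$ may be smaller than $r$. I would handle this as follows:
\begin{itemize}
\item Set $x=\bigl(3^{\mu}\delta\bigr)^{-1/(\mu+1/s-1/p)}$ and take $n=r\lceil x/(2r)\rceil$, or a similar multiple of $r$ lying in $[r,\max(r,x)]$.
\item When $x<2r$, do not take $n$ maximal. Instead take $n=r$ and shrink $\varepsilon$ so that the class constraint holds. Then use $\delta<1$ together with the nonnegativity of the exponent $(\mu+1/s-1-r)/(\mu+1/s-1/p)$ to compare $\varepsilon r^{r+1}$ with $\delta^{(\mu+1/s-1-r)/(\mu+1/s-1/p)}$.
\end{itemize}
Tracking the resulting losses, namely the factor $2$ from rounding, the factor $3^{\mu}$ from the block position, and the powers of $r$ from $n\ge r$, should produce exactly the factors $2^{-(\mu+1/s-1-r)}3^{-\mu}r^{-(\mu+1/s-1-r)}$ in $c_1$. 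I would verify this bookkeeping last.
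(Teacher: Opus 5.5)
Your argument is the paper's: the pair $(\pm f,\mp Gf)$ giving the common data $\mathbf 0$, the block $\{n+r,\dots,2n+r-1\}$, lemma \ref{lemma_BE} with $N=n$, and $n\asymp\delta^{-1/(\mu+1/s-1/p)}$. It does prove the order. What fails is your final claim that the bookkeeping yields exactly the stated $c_1$.

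Write $\gamma=(\mu+1/s-1-r)/(\mu+1/s-1/p)$ and $x=(3^{\mu}\delta)^{-1/(\mu+1/s-1/p)}$. You make the noise constraint tight, $\varepsilon=\delta n^{-1/p}$, and round $n$ \emph{down}, $r\le n\le x$. Since $\delta=3^{-\mu}x^{-(\mu+1/s-1/p)}$, this gives
\[
\bar c\,\varepsilon\,n^{r+1}=\bar c\,\delta\,n^{r+1-1/p}
=\bar c\,3^{-\mu(1-\gamma)}\Bigl(\frac nx\Bigr)^{r+1-1/p}\delta^{\gamma} .
\]
With your choices ($n=r\lceil x/(2r)\rceil$, or $n=r$ when $r\le x<2r$; even the best downward choice $n=\lfloor x\rfloor$), the ratio $n/x$ can be as small as about $1/2$. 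So the rounding loss enters with the exponent $r+1-1/p$, while $c_1$ contains only $2^{-(\mu+1/s-1-r)}$.

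When $\mu+1/s-1-r$ is small this is genuinely worse. Take $r=1$, $s=p=\infty$, $\mu=2.01$, $\delta=0.03$. Then $x\approx1.91$ and $n=1$, so your bound is $0.03\,\bar c$, whereas $c_1\delta^{\gamma}\approx0.107\,\bar c$. Similarly, the class estimate via $(3n+r)^{\mu}\le(6n)^{\mu}$ would cost $6^{-\mu}$. To get $3^{-\mu}$ you need the sharper fact that the largest index of the block is $2n+r-1<3n$.

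The remedy is to round the other way, as the paper does. For $x\ge r$, take $n=\lceil x\rceil\in[x,2x]$ and make the \emph{class} constraint tight, $\varepsilon=3^{-\mu}n^{-\mu-1/s}$. The noise constraint then holds because $n\ge x$, and
\[
\bar c\,\varepsilon n^{r+1}=\bar c\,3^{-\mu}n^{-(\mu+1/s-1-r)}\ge\bar c\,3^{-\mu}(2x)^{-(\mu+1/s-1-r)}=\bar c\,2^{-(\mu+1/s-1-r)}3^{-\mu(1-\gamma)}\delta^{\gamma} .
\]
Now the loss is raised only to the power $\mu+1/s-1-r$.

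For $x<r$, i.e.\ $\delta>3^{-\mu}r^{-(\mu+1/s-1/p)}$, there are two ways to finish. Your choice $n=r$ with class-tight $\varepsilon$ works, since the noise constraint holds precisely because $x<r$. Alternatively, use the monotonicity of $\mathcal E^r_\delta$ in $\delta$ (the paper's route). Either supplies the factor $r^{-(\mu+1/s-1-r)}$ and hence $c_1$.
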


\textit{Proof.} First of all we note that for $\mu>r+1-1/s$ and $1/p\le1$
all the powers below are defined.

\emph{Block level.} Let first $0<\delta\le\delta_*:=3^{-\mu}r^{\,-(\mu+1/s-1/p)}$. Put
$x:=\bigl(3^{-\mu}\delta^{-1}\bigr)^{1/(\mu+1/s-1/p)}$ and $N_1:=\lceil x\rceil$. The condition
$\delta\le\delta_*$ is exactly $x\ge r$, so that $N_1\ge r\ge1$. Hence the restriction
$1\le r\le N_1$ of lemma \ref{lemma_BE} is satisfied --- and, since $x\ge1$, also
$N_1\le x+1\le 2x$. The quantity $N_1$ is not free here: it is
determined by the level $\delta$.

\emph{Function.} Put
$$
a:=3^{-\mu}N_1^{-\mu-1/s} , \qquad
\Omega_1:=\{N_1+r,\ N_1+r+1,\ \ldots,\ 2N_1+r-1\} , \qquad
f_1:=a\sum_{k\in\Omega_1}\ell^{(\alpha)}_k .
$$
Then $\card(\Omega_1)=N_1$ and $\Omega_1\subset[N_1+r,3N_1+r]$, that is, lemma \ref{lemma_BE}
is applicable to $\Omega_1$ with $N=N_1$.

\emph{Membership in the class.} All the indices of the block satisfy
$1\le N_1+r\le k\le 2N_1+r-1\le 3N_1-1<3N_1$. For $1\le s<\infty$ we obtain
$$
\|f_1\|^s_{s,\mu}=a^s\sum_{k\in\Omega_1}k^{s\mu}
\le a^s\,N_1\,(3N_1)^{s\mu}
=3^{-s\mu}N_1^{-s\mu-1}\cdot N_1\cdot 3^{s\mu}N_1^{s\mu}=1 ,
$$
while for $s=\infty$, when $a=3^{-\mu}N_1^{-\mu}$, we have
$$
\|f_1\|_{\infty,\mu}=a\max_{k\in\Omega_1}k^{\mu}\le a\,(3N_1)^{\mu}=1 .
$$
Hence $\pm f_1\in W^\mu_s$.

\emph{Data.} The sequence $\mathbf{f}_1$ has exactly $N_1$ nonzero coordinates, all equal to
$a$, so that for all $1\le p\le\infty$
$$
\|\mathbf{f}_1\|_{\ell_p}=a\,N_1^{1/p}=3^{-\mu}N_1^{-(\mu+1/s-1/p)}
\le 3^{-\mu}x^{-(\mu+1/s-1/p)}=\delta ,
$$
where $N_1\ge x$. This is precisely the condition by which $N_1$ is tied to $\delta$.

\emph{Indistinguishability.} Let $\psi G\in\Psi$ be arbitrary and $g:=\psi(\mathbf{0})$. The pairs
$(f,\bxi)=(f_1,-\mathbf{f}_1)$ and $(f,\bxi)=(-f_1,\mathbf{f}_1)$ are both admissible --- the functions lie
in the class, and $\|\mp\mathbf{f}_1\|_{\ell_p}\le\delta$ --- and both give one and the same data
$Gf+\bxi=\mathbf{0}$. Consequently,
$$
2\,e_{\delta}\bigl(W_{s}^\mu, \psi G, C, \ell_p\bigr)
\ \ge\ \bigl\|(f^{(r)}_1-g)\sqrt{w}\bigr\|_C+\bigl\|(-f^{(r)}_1-g)\sqrt{w}\bigr\|_C
\ \ge\ 2\bigl\|f^{(r)}_1\sqrt{w}\bigr\|_C ,
$$
and, since $\psi$ was arbitrary --- neither linearity nor continuity is
required of it, --- it follows that $\mathcal{E}^r_\delta\ge\|f^{(r)}_1\sqrt{w}\|_C$.

\emph{Estimate of $\bigl\|f^{(r)}_1\sqrt{w}\bigr\|_C$ from below.} By lemma \ref{lemma_BE},
applied to $\Omega_1$ with $N=N_1$,
$$
\bigl\|f^{(r)}_1\sqrt{w}\bigr\|_C\ \ge\ a\,\bar{c}\,N_1^{\,r+1}
=3^{-\mu}\bar{c}\,N_1^{-(\mu+1/s-1-r)} .
$$
Finally, $\mu+1/s-1-r>0$ and $N_1\le 2x$ give
$$
\begin{array}{ll}
N_1^{-(\mu+1/s-1-r)} &\ \ge\ (2x)^{-(\mu+1/s-1-r)} \\
&\ =\ 2^{-(\mu+1/s-1-r)}\;3^{\mu\frac{\mu+1/s-1-r}{\mu+1/s-1/p}}\;
\delta^{\frac{\mu+1/s-1-r}{\mu+1/s-1/p}} ,
\end{array}
$$
which together with the two preceding inequalities gives, for $0<\delta\le\delta_*$,
$$
\begin{array}{ll}
\mathcal{E}^r_\delta &
\ \ge\ \bar{c}\;2^{-(\mu+1/s-1-r)}\;
3^{-\mu\left(1-\frac{\mu+1/s-1-r}{\mu+1/s-1/p}\right)}\;
\delta^{\frac{\mu+1/s-1-r}{\mu+1/s-1/p}} \\
&\ \ge\ c_1\,\delta^{\frac{\mu+1/s-1-r}{\mu+1/s-1/p}} ,
\end{array}
$$
since
$r^{-(\mu+1/s-1-r)}\le1\le3^{\mu(\mu+1/s-1-r)/(\mu+1/s-1/p)}$.

\emph{Remaining $\delta$.} For $\delta_*\le\delta<1$ the quantity $\mathcal{E}^r_\delta$ does not decrease in $\delta$
(the set of admissible perturbations grows), while the substitution $\delta=\delta_*$ into the first line
of the preceding estimate gives exactly $c_1$; therefore
$\mathcal{E}^r_\delta\ge\mathcal{E}^r_{\delta_*}\ge c_1\ge c_1\,\delta^{(\mu+1/s-1-r)/(\mu+1/s-1/p)}$.

Together the two cases cover the whole interval
$0<\delta<1$. \\
$\Box$

\vskip 2mm

\begin{corollary}\label{Cor_opt}
Let $\alpha\ge0$, $1\le s\le\infty$, $1\le p\le\infty$, $\mu>r+1-1/s$ and
$\theta>\mu+1/s-1-r$ be fixed. Then the methods (\ref{ModVer}) from
$\mathcal{S}^\theta$, at every integer truncation level $N\ge r$ with
$N\asymp\delta^{-1/(\mu+1/s-1/p)}$, are order-optimal in
accuracy: for every $0<\delta<1$
$$
\mathcal{E}^r_\delta\bigl(W^{\mu}_{s}, \Psi, C, \ell_p\bigr)\ \asymp\ \delta^{\,(\mu+1/s-1-r)/(\mu+1/s-1/p)} ,
$$
moreover the upper bound is attained on these methods.
\end{corollary}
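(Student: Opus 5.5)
The corollary combines the upper bound of theorem \ref{Th_up} with the lower bound of theorem \ref{Th_low}. The only step needing care is checking that the methods from $\mathcal{S}^\theta$ are admissible competitors in the infimum defining $\mathcal{E}^r_\delta$.

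\emph{Lower estimate.} Theorem \ref{Th_low} gives directly, for every $0<\delta<1$ and all $1\le s,p\le\infty$,
$$
\mathcal{E}^r_\delta\bigl(W^{\mu}_{s},\Psi,C,\ell_p\bigr)\ge c_1\,\delta^{(\mu+1/s-1-r)/(\mu+1/s-1/p)} .
$$
This uses only $\mu>r+1-1/s$. Nothing more is required here.

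\emph{Upper estimate.} Fix a method $S_N^\nu\in\mathcal{S}^\theta$ with $\theta>\mu+1/s-1-r$ and an integer $N\ge r$ with $N\asymp\delta^{-1/(\mu+1/s-1/p)}$. I would realize it as an element of $\Psi$ by setting $\psi(\mathbf{y})(t):=\bigl(\sum_{k=0}^N\nu_k^N y_k\ell_k^{(\alpha)}(t)\bigr)^{(r)}$ for $\mathbf{y}\in\ell_{\max\{p,2\}}$. This is a polynomial, hence belongs to $C$, and it reads only finitely many coordinates, so it is well defined on the whole domain. Then $\psi(Gf+\bxi)=(S_N^\nu\mathbf{f}^\delta)^{(r)}$. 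By theorem \ref{Th_up}, applied to every $f$ in the unit ball $W^\mu_s$ and every $\|\bxi\|_{\ell_p}\le\delta$, we get
$$
e_\delta(W^\mu_s,\psi G,C,\ell_p)\le c\,\delta^{(\mu+1/s-1-r)/(\mu+1/s-1/p)} ,
$$
with $c$ independent of $f$, $\bxi$ and $\delta$. Taking the infimum over $\Psi$ gives the matching upper bound for $\mathcal{E}^r_\delta$, and it is attained on these methods. The same $\psi$ uses only $\Omega=\{0,\dots,N\}$, so in fact it lies in $\Psi_{N+1}$. This observation is not needed here but is consistent with corollary \ref{Cor1}.

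\emph{Main obstacle.} The one point to watch is the interplay of $N\ge r$ with $N\asymp\delta^{-1/(\mu+1/s-1/p)}$ for $\delta$ close to $1$. For such $\delta$ the balance rule may force small $N$, but the hypothesis requires $N\ge r$ anyway. The bound in theorem \ref{Th_up} then still holds with a constant depending on the $\asymp$-constants, since $\delta$ is bounded away from zero on that range and both error terms are bounded by constants. Combining the two estimates yields the two-sided relation $\asymp$.
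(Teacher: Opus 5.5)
Your argument matches the paper's. The lower bound is theorem \ref{Th_low}. The upper bound comes from regarding $\mathbf{y}\mapsto(S^\nu_N\mathbf{y})^{(r)}$ as a recovery operator in $\Psi$ and reading theorem \ref{Th_up} as an estimate of $e_\delta(W^\mu_s,\psi G,C,\ell_p)$. One step is missing, though. Your ``main obstacle'' paragraph points at the right place but settles the wrong question.

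\emph{The missing step: existence of an admissible level.} The quantity $\mathcal{E}^r_\delta$ does not involve $N$. To get the upper bound at \emph{every} $0<\delta<1$, you must show that an admissible level exists for each such $\delta$. That means an integer $N\ge r$ with $a_1x\le N\le a_2x$, where $x:=\delta^{-1/(\mu+1/s-1/p)}$ and $a_1,a_2$ do not depend on $\delta$. You take this as given (``the hypothesis requires $N\ge r$ anyway''), but it is not automatic. If, say, $a_2<r$, then for $\delta$ close to $1$ no integer $N\ge r$ satisfies the relation, and your argument gives nothing at those $\delta$.

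\emph{The point you do address needs no extra work.} Once an admissible $N$ exists, the two-term bound $cN^{-(\mu+1/s-1-r)}+c\,\delta N^{r+1-1/p}$ together with $a_1x\le N\le a_2x$ already gives $c'\delta^{(\mu+1/s-1-r)/(\mu+1/s-1/p)}$ uniformly in $\delta$. No argument that $\delta$ is bounded away from zero is needed.

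\emph{How the paper closes the gap.} It makes the explicit choice $N(\delta):=\max\{r,\lceil x\rceil\}$. Since $x>1$, one has $\lceil x\rceil<x+1<2x$ and $r<rx$, so $x\le N(\delta)\le\max\{r,2\}\,x$. Thus this level is admissible with $a_1=1$, $a_2=\max\{r,2\}$ for all $0<\delta<1$. Alternatively, for $\delta$ bounded away from zero you could fall back on the trivial algorithm $\psi\equiv0$, whose error is the finite constant $B$ of (\ref{Bsup}). With either addition your proof is complete.
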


\textit{Proof.} The lower bound is given by theorem \ref{Th_low}. For the upper one we note that
an element of the class $\Psi$ is not the operator (\ref{ModVer}) itself, which gives an approximation to $f$, but the
mapping
$$
\psi^N:\ \mathbf{y}\ \longmapsto\ \bigl(S^{\nu}_N\mathbf{y}\bigr)^{(r)}
=\sum_{k=r}^{N}\nu^N_k\, y_k\,\bigl(\ell^{(\alpha)}_k\bigr)^{(r)} ,
$$
defined on the whole of $\ell_{\max\{p,2\}}$ and giving an approximation to $f^{(r)}$: this is a finite
linear combination, so that $\psi^N(\mathbf{y})\in C$. The noise level enters not the mapping
itself, but the choice of the truncation level.

Let $0<\delta<1$ and let $N\ge r$ be an arbitrary integer level admissible in the sense of
theorem \ref{Th_up}, that is,
$$
a_1\,\delta^{-1/(\mu+1/s-1/p)}\ \le\ N\ \le\ a_2\,\delta^{-1/(\mu+1/s-1/p)}
$$
with constants $a_1$, $a_2$ not depending on $\delta$. The constant $c$ of theorem
\ref{Th_up} depends on $a_1$ and $a_2$, but not on $f$, not on $\bxi$ and not on $N$, so that
the estimate of that theorem is an estimate of the quantity
$e_\delta(W^\mu_s,\psi^N G,C,\ell_p)$, and then also
$$
\mathcal{E}^r_\delta\bigl(W^{\mu}_{s}, \Psi, C, \ell_p\bigr)\ \le\
e_\delta\bigl(W^\mu_s,\psi^N G,C,\ell_p\bigr)\ \le\ c\,\delta^{(\mu+1/s-1-r)/(\mu+1/s-1/p)} .
$$

Such a level exists for every $0<\delta<1$: put $x:=\delta^{-1/(\mu+1/s-1/p)}>1$ and
$$
N(\delta):=\max\bigl\{r,\ \lceil x\rceil\bigr\} .
$$
This is an integer, $N(\delta)\ge r$, and $x\le N(\delta)\le\max\{r,2\}\,x$, since
$\lceil x\rceil<x+1<2x$ and $r<r\,x$; that is, $N(\delta)$ is admissible with $a_1=1$ and
$a_2=\max\{r,2\}$. \\
$\Box$
\vskip 2mm

Let us pass to the minimal radius of Galerkin information and to the information complexity.
Two simple properties of the quantity $R^r_{N,\delta}$ will be needed. It does not increase in $N$:
$\Psi_N\subset\Psi_{N+1}$, and an infimum over a larger set is no larger. It does not decrease in
$\delta$: as $\delta$ grows, the set of admissible perturbations in the definition of $e_\delta$ grows.
Moreover, for $\mu>r+1-1/s$ the quantity
\begin{equation}\label{Bsup}
B:=\sup\bigl\{\|f^{(r)}\sqrt{w}\|_C:\ f\in W^\mu_s\bigr\}
\end{equation}
is finite: $\|f^{(r)}\sqrt{w}\|_C\le\|(f-S_r f)^{(r)}\sqrt{w}\|_C
+|\langle f,\ell^{(\alpha)}_r\rangle|\,\bigl\| \bigl(\ell_r^{(\alpha)}\bigr)^{(r)} \sqrt{w} \bigr\|_{C}\le c\,\|f\|_{s,\mu}$
by lemma \ref{lemma_BoundT1} with $N=r$. The algorithm with $\Omega=\varnothing$ and $\psi\equiv0$ lies in
$\Psi_N$ for every $N\ge1$, and its error equals $B$; therefore
\begin{equation}\label{RleB}
R^r_{N,\delta}\bigl(W^{\mu}_{s},\Psi_N,C,\ell_p\bigr)\le B
\qquad\text{for all } N\ge1,\ \delta>0 .
\end{equation}

\vskip 2mm

\begin{theorem}\label{Th_low_R}
Let $\alpha\ge0$ be real, $1\le s\le\infty$, $1\le p\le\infty$ and
$\mu>r+1-1/s$. Then for every $0<\delta<1$ and every integer $N\ge1$
\begin{equation}\label{low_est_R}
R^r_{N,\delta}\bigl(W^{\mu}_{s},\Psi_N,C,\ell_p\bigr)\ \ge\
c\,\max\Bigl\{\delta^{\,(\mu+1/s-1-r)/(\mu+1/s-1/p)},\ N^{-(\mu+1/s-1-r)}\Bigr\} .
\end{equation}
\end{theorem}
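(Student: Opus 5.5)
The two terms of the maximum will be proved separately, since $\max\{A,B\}\le A+B\le 2\max\{A,B\}$ and it suffices to bound $R^r_{N,\delta}$ below by each term.

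\emph{First term.} This is immediate from what is already known. The inclusion $\Psi_N\subset\Psi$ gives $R^r_{N,\delta}\ge\mathcal{E}^r_\delta$ for every $N$ and every $\delta$. Theorem \ref{Th_low} then yields $R^r_{N,\delta}\ge c_1\,\delta^{(\mu+1/s-1-r)/(\mu+1/s-1/p)}$ for all $0<\delta<1$.

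\emph{Second term.} This is an information-based (adversary) argument that does not use the noise at all. Fix an arbitrary algorithm $\psi G_\Omega\in\Psi_N$, so $\card(\Omega)\le N$. The interval $[N+r,\,3N+r]$ contains $2N+1$ integers, so at least $N+1\ge N$ of them lie outside $\Omega$. Choose $\Omega'\subset[N+r,3N+r]\setminus\Omega$ with $\card(\Omega')=N$, and put
\[
f_2:=a\sum_{k\in\Omega'}\ell^{(\alpha)}_k,\qquad a:=(3N+r)^{-\mu}N^{-1/s}.
\]
Since every index satisfies $k\le 3N+r\le 4N$, the same computation as in the proof of theorem \ref{Th_low} gives $\|f_2\|_{s,\mu}\le1$, with the usual modification for $s=\infty$. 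Hence $\pm f_2\in W^\mu_s$.

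Because $\Omega'\cap\Omega=\varnothing$, we have $G_\Omega(\pm f_2)=\mathbf{0}$. Taking the zero perturbation $\bxi=0$, which is admissible for every $\delta>0$, both functions produce the same data. Writing $g=\psi(\mathbf{0})$, the triangle inequality gives
\[
2e_\delta\ge\bigl\|(f_2^{(r)}-g)\sqrt w\bigr\|_C+\bigl\|(-f_2^{(r)}-g)\sqrt w\bigr\|_C\ge 2\bigl\|f_2^{(r)}\sqrt w\bigr\|_C .
\]
Lemma \ref{lemma_BE} applies with this $N$ (it requires $1\le r\le N$) and gives
\[
\bigl\|f_2^{(r)}\sqrt w\bigr\|_C\ge a\,\bar c\,N^{r+1}\ge \bar c\,4^{-\mu}N^{-(\mu+1/s-1-r)} .
\]
Since the algorithm was arbitrary, the infimum over $\Psi_N$ satisfies the same bound.

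\emph{Main obstacle: the case $N<r$.} Lemma \ref{lemma_BE} needs $r\le N$, so here I plan a different argument. For $N<r$ the factor $N^{-(\mu+1/s-1-r)}$ is bounded between two constants depending on $r,\mu,s$. It therefore suffices to show $R^r_{N,\delta}\ge c_2>0$ uniformly in $\delta$ and in $N<r$. By monotonicity, $R^r_{N,\delta}\ge R^r_{r-1,\delta}$, since $R$ does not increase in $N$. I then apply the same disjoint-block construction with block length $r$ in place of $N$. Choose $\Omega'\subset[r+r,3r+r]$ avoiding $\Omega$, with $\card(\Omega')=r$; this is possible because $\card(\Omega)<r$ and the interval contains $2r+1$ integers. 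Lemma \ref{lemma_BE} with parameter $r$ then gives a lower bound $c(r,\mu,s,\alpha)>0$. This bound is independent of $\delta$ because again $\bxi=0$ is used. Adjusting the constant $c$ combines the cases $N\ge r$ and $N<r$, and the maximum of the two bounds proves (\ref{low_est_R}).
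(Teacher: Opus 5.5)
Your proof is correct and follows the paper's argument. The first term comes from $\Psi_N\subset\Psi$ together with theorem \ref{Th_low}. The second term uses an $N$-element block in $[N+r,3N+r]$ disjoint from the algorithm's index set, zero noise, indistinguishability of $\pm f_2$ and lemma \ref{lemma_BE}, with $N<r$ handled by monotonicity in $N$. The differences are cosmetic: your amplitude $(3N+r)^{-\mu}N^{-1/s}$ replaces the paper's $4^{-\mu}N^{-\mu-1/s}$, and for $N<r$ you rebuild the block at length $r$ instead of quoting the already-proved case $N=r$ via $R^r_{N,\delta}\ge R^r_{r,\delta}$.
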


\textit{Proof.} The first estimate follows from the inclusion $\Psi_N\subset\Psi$ and theorem
\ref{Th_low}: $R^r_{N,\delta}\ge\mathcal E^r_\delta\ge c_1\delta^{(\mu+1/s-1-r)/(\mu+1/s-1/p)}$
for all $0<\delta<1$.

We prove the second one first for $N\ge r$. Let us fix an arbitrary algorithm
$\psi G_{\widehat\Omega}\in\Psi_N$, $\card(\widehat\Omega)\le N$. The segment $[N+r,3N+r]$ contains
$2N+1$ integers, among which $\widehat\Omega$ contains at most $N$; let us choose a set
$\Lambda_N\subset[N+r,3N+r]\setminus\widehat\Omega$ of exactly $N$ numbers and put
$$
f_2(t):=4^{-\mu}N^{-\mu-1/s}\sum_{k\in\Lambda_N}\ell^{(\alpha)}_k(t) .
$$
Since $k\le3N+r\le4N$ for $k\in\Lambda_N$, we have
$\|f_2\|^s_{s,\mu}\le4^{-s\mu}N^{-s\mu-1}\cdot N\cdot(4N)^{s\mu}=1$ for $s<\infty$ and
$\|f_2\|_{\infty,\mu}\le4^{-\mu}N^{-\mu}(4N)^{\mu}=1$, that is, $\pm f_2\in W^\mu_s$. By lemma
\ref{lemma_BE}
$$
\bigl\|f_2^{(r)}\sqrt{w}\bigr\|_C\ \ge\ 4^{-\mu}\bar c\,N^{\,r+1-\mu-1/s}
=4^{-\mu}\bar c\,N^{-(\mu+1/s-1-r)} .
$$
The Fourier--Laguerre coefficients of the functions $\pm f_2$ are nonzero only for $k\in\Lambda_N$, while
$\Lambda_N\cap\widehat\Omega=\varnothing$; therefore, for exact data ($\bxi=\mathbf 0$, which
is admissible for every $\delta>0$) the algorithm receives on $f_2$ and on $-f_2$ one and the same data
$G_{\widehat\Omega}(\pm f_2)=\mathbf 0$ and returns one and the same element $u=\psi(\mathbf 0)$.
If $\|u\sqrt{w}\|_C=\infty$, then $e_\delta=\infty$ and there is nothing to prove; otherwise, by the triangle
inequality
$$
2\bigl\|f_2^{(r)}\sqrt{w}\bigr\|_C\le\bigl\|(f_2^{(r)}-u)\sqrt{w}\bigr\|_C
+\bigl\|(-f_2^{(r)}-u)\sqrt{w}\bigr\|_C
\le2\,e_\delta\bigl(W^{\mu}_{s},\psi G_{\widehat\Omega},C,\ell_p\bigr) .
$$
The right-hand side of the obtained estimate $e_\delta\ge4^{-\mu}\bar c\,N^{-(\mu+1/s-1-r)}$ depends neither
on $\psi$ nor on $\widehat\Omega$; passing to the infimum over $\Psi_N$, we obtain the second estimate for
$N\ge r$. For $1\le N<r$ (this case is possible only for $r\ge2$), by monotonicity in $N$,
$$
R^r_{N,\delta}\ge R^r_{r,\delta}\ge4^{-\mu}\bar c\,r^{-(\mu+1/s-1-r)}
\ge4^{-\mu}\bar c\,r^{-(\mu+1/s-1-r)}\,N^{-(\mu+1/s-1-r)} ,
$$
since $N^{-(\mu+1/s-1-r)}\le1$. This is what had to be proved.\\
$\Box$

\vskip 2mm

\begin{theorem}\label{Th_R_order}
Under the hypotheses of theorem \ref{Th_low_R}, for every $0<\delta<1$ and every integer $N\ge1$,
such that $N\asymp\delta^{-1/(\mu+1/s-1/p)}$, there holds
$$
R^r_{N,\delta}\bigl(W^{\mu}_{s},\Psi_N,C,\ell_p\bigr)\ \asymp\
\delta^{\,(\mu+1/s-1-r)/(\mu+1/s-1/p)}\ \asymp\ N^{-(\mu+1/s-1-r)} ,
$$
moreover the upper estimate is attained on the methods (\ref{ModVer}) from $\mathcal S^\theta$,
$\theta>\mu+1/s-1-r$.
\end{theorem}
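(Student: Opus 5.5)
The plan is to combine the lower estimate of theorem \ref{Th_low_R} with an upper estimate obtained from theorem \ref{Th_up}, applied to an algorithm from $\Psi_N$ built from a method of $\mathcal S^\theta$. The proof then closes by observing that, under $N\asymp\delta^{-1/(\mu+1/s-1/p)}$, the two quantities $\delta^{(\mu+1/s-1-r)/(\mu+1/s-1/p)}$ and $N^{-(\mu+1/s-1-r)}$ are of the same order. Indeed, raising $a_1\delta^{-1/(\mu+1/s-1/p)}\le N\le a_2\delta^{-1/(\mu+1/s-1/p)}$ to the negative power $-(\mu+1/s-1-r)$ gives
\[
a_2^{-(\mu+1/s-1-r)}\,\delta^{\frac{\mu+1/s-1-r}{\mu+1/s-1/p}}\le N^{-(\mu+1/s-1-r)}\le a_1^{-(\mu+1/s-1-r)}\,\delta^{\frac{\mu+1/s-1-r}{\mu+1/s-1/p}} .
\]
This exponent is negative since $\mu>r+1-1/s$. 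This settles the second $\asymp$, and the lower bound of the first $\asymp$ follows directly from (\ref{low_est_R}), for instance from its first term.

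For the upper estimate, I would count the coefficients used by the method carefully. The mapping $\psi^N$ from the proof of corollary \ref{Cor_opt}, namely $\mathbf y\mapsto\sum_{k=r}^{N}\nu_k^N y_k(\ell_k^{(\alpha)})^{(r)}$, reads only the coordinates with indices in $\{r,\dots,N\}$. Its cardinality is $N-r+1\le N$. I would therefore set $\Omega=\{r,\dots,N\}$ when $N\ge r$, and write $\psi^N=\psi^N\circ P_\Omega$, so that $\psi^N G=\psi^N G_\Omega\in\Psi_N$. Theorem \ref{Th_up} then gives
\[
R^r_{N,\delta}\le e_\delta(W^\mu_s,\psi^N G_\Omega,C,\ell_p)\le c\,\delta^{(\mu+1/s-1-r)/(\mu+1/s-1/p)} .
\]
Here the constant depends on $a_1,a_2$ but not on $\delta$ or $N$. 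This also shows that the upper estimate is attained on methods of $\mathcal S^\theta$.

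The only real obstacle is the range $1\le N<r$, which is allowed by the statement but not by theorem \ref{Th_up}, since that theorem needs $N\ge r$. There I would use (\ref{RleB}), which gives $R^r_{N,\delta}\le B$. When $N<r$, the relation $N\ge a_1\delta^{-1/(\mu+1/s-1/p)}$ forces $\delta\ge(a_1/r)^{\mu+1/s-1/p}$. So $\delta$ is bounded below by a constant, and hence
\[
\delta^{(\mu+1/s-1-r)/(\mu+1/s-1/p)}\ge (a_1/r)^{\mu+1/s-1-r} .
\]
Consequently $B\le c\,\delta^{(\mu+1/s-1-r)/(\mu+1/s-1/p)}$ with $c=B\,(r/a_1)^{\mu+1/s-1-r}$, and this constant is again independent of $\delta$ and $N$. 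In this degenerate range the trivial algorithm with $\Omega=\varnothing$ and $\psi\equiv0$ attains the order, so the attainment claim for $\mathcal S^\theta$ should be read for $N\ge r$. Combining the two cases gives the full two-sided estimate.
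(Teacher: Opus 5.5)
Your proposal is correct and follows the paper's proof essentially step by step: the lower bound comes from theorem \ref{Th_low_R}; for $N\ge r$ the upper bound comes from $\psi^N G_\Omega$ with $\Omega=\{r,\dots,N\}$ together with theorem \ref{Th_up}; for $N<r$ it comes from (\ref{RleB}), using that $\delta$ is then bounded away from zero. Your remark that attainment on $\mathcal S^\theta$ should be read for $N\ge r$ is a fair clarification of a point the paper leaves implicit.
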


\textit{Proof.} The lower estimate is theorem \ref{Th_low_R}. Let
$a_1\delta^{-1/(\mu+1/s-1/p)}\le N\le a_2\delta^{-1/(\mu+1/s-1/p)}$. If $N\ge r$, then the
method $S^\nu_N$, understood as the mapping $\psi^N$ from the proof of corollary
\ref{Cor_opt}, reads only the coefficients with indices $r,\dots,N$, since
$(\ell^{(\alpha)}_k)^{(r)}\equiv0$ for $k<r$; hence $\card(\Omega)=N-r+1\le N$ and
$\psi^N G_\Omega\in\Psi_N$, and by theorem \ref{Th_up} its error does not exceed
$c\,\delta^{(\mu+1/s-1-r)/(\mu+1/s-1/p)}$ with a constant depending on $a_1$ and $a_2$, whence
the upper estimate follows. If, however, $N<r$, then $\delta^{-1/(\mu+1/s-1/p)}<r/a_1$, that is, $\delta$
is bounded away from zero, and by (\ref{RleB})
$R^r_{N,\delta}\le B\le B\,(r/a_1)^{\mu+1/s-1-r}\,\delta^{(\mu+1/s-1-r)/(\mu+1/s-1/p)}$.
The relation $\delta^{(\mu+1/s-1-r)/(\mu+1/s-1/p)}\asymp N^{-(\mu+1/s-1-r)}$ is the rewritten
condition on $N$.\\
$\Box$

\vskip 2mm

\begin{corollary}\label{Cor_Nrange}
Under the hypotheses of theorem \ref{Th_low_R} let $0<\delta<1$.

{\rm (i)} For every $A>0$ there exists $\varepsilon>0$, depending only on $A$, $\alpha$, $r$,
$\mu$, $s$, $p$, such that for all integers $1\le N\le\varepsilon\,\delta^{-1/(\mu+1/s-1/p)}$
$$
R^r_{N,\delta}\bigl(W^{\mu}_{s},\Psi_N,C,\ell_p\bigr)\ \ge\
A\,\delta^{\,(\mu+1/s-1-r)/(\mu+1/s-1/p)} ;
$$
in particular, if integers $N=N(\delta)\ge1$ are such that
$N(\delta)=o\bigl(\delta^{-1/(\mu+1/s-1/p)}\bigr)$ as $\delta\to0$, then
$$
R^r_{N(\delta),\delta}\bigl(W^{\mu}_{s},\Psi_{N(\delta)},C,\ell_p\bigr)\Big/
\delta^{\,(\mu+1/s-1-r)/(\mu+1/s-1/p)}\ \longrightarrow\ \infty ,
\qquad \delta\to0 ,
$$
that is, on such a rule for the choice of the truncation level the optimal order of accuracy
is unattainable.

{\rm (ii)} For every $c_0>0$ there exists $C_0>0$, depending only on $c_0$, $\alpha$, $r$,
$\mu$, $s$, $p$, such that for all integers $N\ge c_0\,\delta^{-1/(\mu+1/s-1/p)}$
$$
R^r_{N,\delta}\bigl(W^{\mu}_{s},\Psi_N,C,\ell_p\bigr)\ \le\
C_0\,\delta^{\,(\mu+1/s-1-r)/(\mu+1/s-1/p)} .
$$
\end{corollary}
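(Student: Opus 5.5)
Both parts follow from the two bounds already proved: the lower estimate of theorem \ref{Th_low_R}, with its $N$-dependent term $c\,N^{-(\mu+1/s-1-r)}$, and the upper estimate from theorem \ref{Th_up} together with (\ref{RleB}). Throughout write $\gamma:=\mu+1/s-1-r>0$, $\lambda:=\mu+1/s-1/p>0$ and $x:=\delta^{-1/\lambda}>1$. Then the target accuracy is $\delta^{\gamma/\lambda}=x^{-\gamma}$.

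For (i), fix $A>0$ and let $c$ be the constant in (\ref{low_est_R}), which depends only on $\alpha,r,\mu,s,p$. For $1\le N\le\varepsilon x$ the second term of theorem \ref{Th_low_R} gives
$$
R^r_{N,\delta}\ge c\,N^{-\gamma}\ge c\,\varepsilon^{-\gamma}x^{-\gamma}=c\,\varepsilon^{-\gamma}\delta^{\gamma/\lambda}.
$$
It therefore suffices to take $\varepsilon:=(c/A)^{1/\gamma}$. For the ``in particular'' part, suppose $N(\delta)=o(x)$. Then for each $A$ we have $N(\delta)\le\varepsilon(A)\,x$ for all small $\delta$, so the ratio is at least $A$; letting $A$ be arbitrary gives divergence to $\infty$.

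For (ii), fix $c_0>0$ and let $N\ge c_0x$. Since $R^r_{N,\delta}$ is nonincreasing in $N$, it is enough to bound $R^r_{N',\delta}$ for a single integer $N'\le N$ with $N'\asymp x$, using constants depending only on $c_0$. Put $N':=\max\{r,\lceil c_0x\rceil\}$ and split into two cases.
\begin{itemize}
\item If $\lceil c_0 x\rceil\ge r$, then $N'=\lceil c_0x\rceil\le N$ and $c_0x\le N'\le \max\{2c_0,\,c_0+1\}\,x$, since $x>1$. Take a fixed method, say the de la Vall\'ee Poussin array (\ref{vp}) with $\rho=1/2$ and some fixed $\theta>\gamma$; this fixes $C(\nu)$. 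As in the proof of theorem \ref{Th_R_order}, it lies in $\Psi_{N'}$, because it reads only the $N'-r+1\le N'$ coefficients with indices $r,\dots,N'$. Theorem \ref{Th_up} then gives $R^r_{N,\delta}\le R^r_{N',\delta}\le C\,\delta^{\gamma/\lambda}$, with $C$ depending only on $c_0$ and the fixed data.
\item If $c_0x<r$, then $x<r/c_0$, so $\delta$ is bounded below. By (\ref{RleB}),
$$
R^r_{N,\delta}\le B\le B\,(r/c_0)^{\gamma}x^{-\gamma}=B\,(r/c_0)^{\gamma}\delta^{\gamma/\lambda}.
$$
\end{itemize}
Set $C_0$ to be the larger of the two constants.

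The main obstacle is making sure that $C_0$ depends only on $c_0,\alpha,r,\mu,s,p$ and not on $N$ or $\delta$. The case $N\gg x$ is handled by the monotonicity in $N$, which lets us pass to the comparable level $N'$. The other point is that the constant of theorem \ref{Th_up} depends on $\theta$ and $C(\nu)$; this is harmless once a concrete method is fixed, for instance with $\theta=\gamma+1$ and $C(\nu)=2^{\theta}$ as in Example \ref{Ex1}.
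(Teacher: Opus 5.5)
Your proof is correct and essentially the paper's: part (i) is identical, with $\varepsilon=(c/A)^{1/(\mu+1/s-1-r)}$ taken from the $N$-term of theorem \ref{Th_low_R}, and part (ii) uses the same three ingredients, namely monotonicity of $R^r_{N,\delta}$ in $N$, the upper bound of a fixed method at a comparable level $\ge r$, and (\ref{RleB}) when that level would fall below $r$. The only differences are cosmetic: the paper passes to $n=\min\{N,\lceil\delta^{-1/(\mu+1/s-1/p)}\rceil\}$ and invokes theorem \ref{Th_R_order} with the Fourier method, while you pass to $\lceil c_0\,\delta^{-1/(\mu+1/s-1/p)}\rceil$ and apply theorem \ref{Th_up} directly to the de la Vall\'ee Poussin array.
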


\textit{Proof.} {\rm (i)} Let $c$ be the constant of the estimate (\ref{low_est_R}) and
$\varepsilon:=(c/A)^{1/(\mu+1/s-1-r)}$. For $N\le\varepsilon\,\delta^{-1/(\mu+1/s-1/p)}$
according to (\ref{low_est_R}) we have
$$
R^r_{N,\delta}\ \ge\ c\,N^{-(\mu+1/s-1-r)}\ \ge\
c\,\bigl(\varepsilon\,\delta^{-1/(\mu+1/s-1/p)}\bigr)^{-(\mu+1/s-1-r)}
=A\,\delta^{(\mu+1/s-1-r)/(\mu+1/s-1/p)} .
$$
For the last assertion fix $A>0$ and take the $\varepsilon>0$ just found; it does not depend
on $\delta$. Since $N(\delta)\,\delta^{1/(\mu+1/s-1/p)}\to0$, for all sufficiently small
$\delta$ we have $N(\delta)\le\varepsilon\,\delta^{-1/(\mu+1/s-1/p)}$ and therefore
$R^r_{N(\delta),\delta}\ge A\,\delta^{(\mu+1/s-1-r)/(\mu+1/s-1/p)}$. As $A>0$ is arbitrary,
the ratio tends to infinity.

{\rm (ii)} Put $n:=\min\{N,\lceil\delta^{-1/(\mu+1/s-1/p)}\rceil\}$; then $n\le N$ and
$\min\{c_0,1\}\,\delta^{-1/(\mu+1/s-1/p)}\le n\le2\,\delta^{-1/(\mu+1/s-1/p)}$, where the last
inequality follows from $\delta^{-1/(\mu+1/s-1/p)}>1$. If $n\ge r$, then by monotonicity and
theorem \ref{Th_R_order}, applied to $n$ with the constants $a_1=\min\{c_0,1\}$ and $a_2=2$ and with
the Fourier method $\nu^N_k\equiv1$ (it lies in $\mathcal S^\theta$ for $\theta=\mu+1/s-r$ with $C(\nu)=1$,
so that the constant depends only on $\alpha$, $r$, $\mu$, $s$, $p$, $a_1$, $a_2$),
$R^r_{N,\delta}\le R^r_{n,\delta}\le c\,\delta^{(\mu+1/s-1-r)/(\mu+1/s-1/p)}$. If $n<r$, then
$\min\{c_0,1\}\,\delta^{-1/(\mu+1/s-1/p)}<r$, whence
$\delta>\delta_0:=(\min\{c_0,1\}/r)^{\mu+1/s-1/p}$, and by (\ref{RleB})
$R^r_{N,\delta}\le B\le B\,\delta_0^{-(\mu+1/s-1-r)/(\mu+1/s-1/p)}\,
\delta^{(\mu+1/s-1-r)/(\mu+1/s-1/p)}$. It remains to take $C_0$ equal to the larger of the two
constants.\\
$\Box$

\vskip 2mm

\begin{remark}\label{rem_onemeth}\rm
As follows from theorem \ref{Th_R_order}, a single method (\ref{ModVer}) with a suitable $\theta$
makes it possible to ensure optimal accuracy of recovery of all admissible derivatives, since
the truncation order $N$ does not depend on $r$.
\end{remark}
\vskip 2mm

\begin{theorem}\label{Th_Nmin}
Under the hypotheses of theorem \ref{Th_low_R} let the constant $K$ in the definition (\ref{Nmin}) be no smaller
than the constant $C_0$ of corollary \ref{Cor_Nrange}{\rm (ii)} with $c_0=1$. Then the quantity
$N_{\min}(\delta)$ is defined for every $0<\delta<1$, and
\begin{equation}\label{optNp}
N_{\min}(\delta)\ \asymp\ \delta^{-1/(\mu+1/s-1/p)} ,
\qquad
R^r_{N_{\min},\delta}\bigl(W^{\mu}_{s},\Psi_{N_{\min}},C,\ell_p\bigr)\ \asymp\
\delta^{\,(\mu+1/s-1-r)/(\mu+1/s-1/p)} .
\end{equation}
\end{theorem}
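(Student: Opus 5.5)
The argument has three parts: show that the set defining $N_{\min}$ is nonempty, get an upper bound on $N_{\min}$, and get a lower bound on $N_{\min}$. Throughout, put $x:=\delta^{-1/(\mu+1/s-1/p)}>1$ and $\gamma:=\mu+1/s-1-r>0$.

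\emph{Nonemptiness and upper bound.} Take the integer $N^*:=\lceil x\rceil\ge1$. Since $N^*\ge x=1\cdot x$, corollary \ref{Cor_Nrange}{\rm (ii)} with $c_0=1$ gives
$$R^r_{N^*,\delta}\le C_0\,\delta^{\gamma/(\mu+1/s-1/p)}.$$
Because $K\ge C_0$, the integer $N^*$ belongs to the set in (\ref{Nmin}). So the set is nonempty, $N_{\min}(\delta)$ is defined, and
$$N_{\min}\le\lceil x\rceil<x+1<2x.$$
The same observation gives the upper half of the second relation in (\ref{optNp}) for free: by the very definition of $N_{\min}$,
$$R^r_{N_{\min},\delta}\le K\,\delta^{\gamma/(\mu+1/s-1/p)}.$$

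\emph{Lower bound.} This is the only step that needs care. I would apply corollary \ref{Cor_Nrange}{\rm (i)} with $A:=2K$ and obtain the corresponding $\varepsilon=\varepsilon(2K)$. It depends on $K$, which is allowed since the constants $c$ may depend on $K$. Then every integer $1\le N\le\varepsilon x$ satisfies
$$R^r_{N,\delta}\ge 2K\,\delta^{\gamma/(\mu+1/s-1/p)}>K\,\delta^{\gamma/(\mu+1/s-1/p)},$$
where the strict inequality uses $\delta>0$. Hence no such $N$ lies in the defining set, and $N_{\min}>\varepsilon x$.

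The one subtlety is when $\varepsilon x<1$. In that case the range $1\le N\le\varepsilon x$ is empty, but the bound $N_{\min}\ge1>\varepsilon x$ holds trivially. So $N_{\min}\ge\varepsilon x$ in all cases. An alternative route to the same bound is to use (\ref{low_est_R}) directly: $c\,N_{\min}^{-\gamma}\le R^r_{N_{\min},\delta}\le K\delta^{\gamma/(\mu+1/s-1/p)}$ gives $N_{\min}\ge (c/K)^{1/\gamma}x$. This is even cleaner, and I would present it as the main argument.

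\emph{Conclusion.} Combining the two bounds, $(c/K)^{1/\gamma}x\le N_{\min}\le 2x$, which is the first relation in (\ref{optNp}). For the second relation, the lower estimate comes from the first term of (\ref{low_est_R}) in theorem \ref{Th_low_R}, since it holds for every $N\ge1$, and the upper estimate was obtained above. Alternatively, once $N_{\min}\asymp x$ is established, theorem \ref{Th_R_order} applies to $N=N_{\min}$ directly and gives both directions at once.
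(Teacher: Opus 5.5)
Your proof is correct and follows the paper's own argument. Nonemptiness and $N_{\min}\le\lceil x\rceil\le 2x$ come from corollary \ref{Cor_Nrange}{\rm (ii)} with $c_0=1$ and $K\ge C_0$. The bound $N_{\min}>\varepsilon x$ comes from corollary \ref{Cor_Nrange}{\rm (i)} with $A=2K$. The two-sided estimate for $R^r_{N_{\min},\delta}$ comes from the definition (\ref{Nmin}) together with theorem \ref{Th_low_R}.

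Your preferred variant of the lower bound, combining $c\,N_{\min}^{-\gamma}\le R^r_{N_{\min},\delta}\le K\,\delta^{\gamma/(\mu+1/s-1/p)}$ via (\ref{low_est_R}), simply inlines the proof of corollary \ref{Cor_Nrange}{\rm (i)}. It is equally valid and avoids the separate remark about the case $\varepsilon x<1$.
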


\textit{Proof.} By corollary \ref{Cor_Nrange}{\rm (ii)} with $c_0=1$ the number
$N=\lceil\delta^{-1/(\mu+1/s-1/p)}\rceil$ satisfies
$R^r_{N,\delta}\le C_0\,\delta^{(\mu+1/s-1-r)/(\mu+1/s-1/p)}\le K\,\delta^{(\mu+1/s-1-r)/(\mu+1/s-1/p)}$,
so that the set in (\ref{Nmin}) is nonempty and
$N_{\min}\le\lceil\delta^{-1/(\mu+1/s-1/p)}\rceil\le2\,\delta^{-1/(\mu+1/s-1/p)}$. In view of
monotonicity in $N$, this set contains, together with each of its elements, all larger
indices as well, so that $N_{\min}$ is a threshold. By corollary \ref{Cor_Nrange}{\rm (i)} with $A=2K$, for all
$N\le\varepsilon\,\delta^{-1/(\mu+1/s-1/p)}$ we have
$R^r_{N,\delta}\ge2K\,\delta^{(\mu+1/s-1-r)/(\mu+1/s-1/p)}>K\,\delta^{(\mu+1/s-1-r)/(\mu+1/s-1/p)}$,
that is, such $N$ do not belong to this set, and
$N_{\min}>\varepsilon\,\delta^{-1/(\mu+1/s-1/p)}$. The second relation (\ref{optNp}):
$R^r_{N_{\min},\delta}\le K\,\delta^{(\mu+1/s-1-r)/(\mu+1/s-1/p)}$ by the definition of $N_{\min}$, while
$R^r_{N_{\min},\delta}\ge c\,\delta^{(\mu+1/s-1-r)/(\mu+1/s-1/p)}$ by theorem
\ref{Th_low_R}.\\
$\Box$

\vskip 2mm

\begin{remark}\label{rem_Nmin}\rm
In terms of the theory of information complexity, the quantity $N_{\min}$ is the information complexity
of the problem of numerical differentiation based on Galerkin information. Theorem \ref{Th_Nmin}
shows that the smallest, in order, number of perturbed Fourier--Laguerre coefficients
sufficient for the optimal order of accuracy equals $\delta^{-1/(\mu+1/s-1/p)}$, while
corollary \ref{Cor_Nrange}{\rm (i)} --- that a smaller number is not sufficient. The rule for choosing
the truncation level $N\asymp\delta^{-1/(\mu+1/s-1/p)}$ of theorems \ref{Th_up} and \ref{Th_R_order}
coincides in order with $N_{\min}$: the methods (\ref{ModVer}) use exactly (in order)
the information-complexity amount of data, and this amount does not depend on the order of the derivative $r$.
\end{remark}

\section{Computational experiments}\label{exp}

We now give numerical illustrations of the results obtained. Two examples are considered: a test
function with oscillating derivatives and the applied problem of Laplace transform inversion by
the Post--Widder formula, where derivatives of different orders of one and the same function are required.
The computations were performed on a computer with an Apple M4 processor (10 cores) and 24 GB of memory,
running macOS 26.6.1, Python 3.14.6, NumPy 2.5.1; the figures were produced with
Matplotlib 3.11.0. All
tables and both figures of this section are generated by a single script.

Throughout this section $p=s=2$, the weight is $w(t)=t^{\alpha}e^{-t}$ with $\alpha=1/2$ and $\alpha=1$, and
the input error levels in both examples are
$\delta=10^{-5},10^{-6},10^{-7},10^{-8}$. In example~1 we approximate the derivatives of orders
$r=1$ and $r=2$, and in example~2 those of orders $r=2$ and $r=3$. The experiment
follows the setting of sections \ref{prelim}--\ref{EE} literally; it is based on four conventions.

{\it Normalization.} All estimates in sections \ref{EE} and \ref{sharp} are stated on the unit ball
of the space $W^\mu_s$, and therefore each function is normalized before noise is added,
$\widehat f:=f/\|f\|_{2,\mu}$, and every number reported below refers to $\widehat f$. In
particular, $\mathbf f$ is the vector of exact Fourier--Laguerre coefficients of the function $\widehat f$,
truncated at $k=N$.

{\it Perturbed data.} $\mathbf f^\delta=\mathbf f+\bxi$, where the error
sequence is a single random realization on the sphere of radius $\delta$,
$$
\bxi=\delta\,\frac{\mathbf g}{\|\mathbf g\|_{\ell_2}},\qquad
\mathbf g=(g_0,\ldots,g_N),
$$
with independent standard normal $g_k$; then $\|\bxi\|_{\ell_2}=\delta$ exactly, that
is, condition (\ref{perturbation2}) for $p=2$ holds with equality. The realization was generated
by NumPy's PCG64 generator with seed $20260909$.

{\it Truncation level.} For $p=s=2$ theorem \ref{Th_up} prescribes
$N\asymp\delta^{-1/\mu}$, and we take $N=\lceil\delta^{-1/\mu}\rceil$, rounded up to the
nearest even integer: evenness is required by the de la Vall\'{e}e Poussin method, for which $N=2n$, and is
harmless for the Fourier sums. This rule \emph{does not depend on $r$}: one and the same $N$ serves both orders in
each example, which is exactly what remark \ref{rem_level} asserts.

{\it Error.} The reported error is the weighted uniform norm
$$
\max_{t>0}\bigl|\bigl(\widehat f-S^\nu_N\mathbf f^\delta\bigr)^{(r)}(t)\sqrt{w(t)}\bigr| ,
$$
that is, exactly the quantity estimated in theorem \ref{Th_up}. The maximum is taken over a grid
uniform in $\sqrt t$: near zero $\varphi^{(\beta)}_m$ behaves like $J_\beta(2\sqrt{mt})$,
and the wavelength in the variable $t$ decreases like $\sqrt{t/m}$, so that a grid uniform in $t$
would resolve the peak incorrectly. A fourfold refinement of the grid moves the reported maxima by no more than
$0.09\,\%$; the contribution of $t$ beyond the right endpoint of the grid does not exceed $3\cdot10^{-16}$.

\subsection{Example 1}

We consider the function
$$
f_1(t)=t^{5/2}e^{-t/2}\cos 2t .
$$
Its Fourier--Laguerre coefficients are written in closed form by means of the generating function
of the Laguerre polynomials: for $\beta=\alpha+5/2$ and $s=3/2-2i$
$$
\sum_{k\ge0}\Bigl(\int_0^\infty t^{\beta}e^{-st}L^{(\alpha)}_k(t)\,dt\Bigr)z^k
=\Gamma(\beta+1)\,s^{-(\beta+1)}(1-z)^{5/2}(1-qz)^{-(\beta+1)},\qquad q=\frac{s-1}{s} ,
$$
whence $\langle f_1,\ell^{(\alpha)}_k\rangle$ is obtained as the real part of the coefficient
of $z^k$, multiplied by $\sqrt{k!/\Gamma(k+\alpha+1)}$. Hence
$\langle f_1,\ell^{(\alpha)}_k\rangle\asymp k^{-(7/2+\alpha/2)}$, so that $\|f_1\|_{2,\mu}$
is finite if and only if $\mu<\mu_{\max}:=3+\alpha/2$: this is $3.25$ for
$\alpha=1/2$ and $3.5$ for $\alpha=1$. As the working value we take the largest number with one decimal
place strictly below this bound, that is, $\mu=3.2$ for $\alpha=1/2$ and $\mu=3.4$ for
$\alpha=1$; the condition $\mu>r+1-1/s$ holds both for $r=1$ and for $r=2$, and a single $\mu$ serves both
derivatives.

Let us note a feature of this function. The factor $e^{-t/2}\cos2t$ produces in the coefficients
a geometric decay with ratio $|q|=0.8246$, which up to $k\approx150$ masks
the power-law rate; therefore for small $N$ the approximation loses not so much from noise as from
truncation. Moreover, the normalized amplitude here is small: the peak of the quantity
$|\widehat f_1^{\,(r)}\sqrt w|$ for $\alpha=1/2$ equals $8.6\cdot10^{-5}$ for $r=1$ and
$1.9\cdot10^{-4}$ for $r=2$. Hence also the coarsest of the levels considered: at
$\delta=10^{-4}$ the quantity being recovered would be on a par with the noise.

Tables~\ref{tbl1} and \ref{tbl2} present the results of approximating the derivatives of $f_1$ by the
Fourier and de la Vall\'{e}e Poussin methods for $\alpha=1/2$ and $\alpha=1$.

\begin{table}[ht]
\caption{Approximation of $f_1'$, $f_1(t)=t^{5/2}e^{-t/2}\cos2t$. Left: $\alpha=1/2$,
$\mu=3.2$, $\|f_1\|_{2,\mu}=2.44\cdot10^{4}$; right: $\alpha=1$, $\mu=3.4$,
$\|f_1\|_{2,\mu}=7.57\cdot10^{4}$.}\label{tbl1}
\centering
\begin{tabular}{ccccccc}
\hline
 & \multicolumn{3}{c}{$\alpha=1/2$} & \multicolumn{3}{c}{$\alpha=1$} \\
$\delta$ & $N$ & Fourier & de la Vall\'{e}e P. & $N$ & Fourier & de la Vall\'{e}e P. \\
\hline
$10^{-5}$ &  38 & $2.47\cdot10^{-5}$ & $2.46\cdot10^{-5}$ &  30 & $2.46\cdot10^{-5}$ & $3.35\cdot10^{-5}$ \\
$10^{-6}$ &  76 & $7.99\cdot10^{-6}$ & $5.80\cdot10^{-6}$ &  60 & $4.99\cdot10^{-6}$ & $3.54\cdot10^{-6}$ \\
$10^{-7}$ & 154 & $1.12\cdot10^{-6}$ & $1.29\cdot10^{-6}$ & 116 & $1.75\cdot10^{-6}$ & $9.78\cdot10^{-7}$ \\
$10^{-8}$ & 318 & $3.09\cdot10^{-7}$ & $3.41\cdot10^{-7}$ & 226 & $6.37\cdot10^{-7}$ & $5.31\cdot10^{-7}$ \\
\hline
\end{tabular}
\end{table}

\begin{table}[ht]
\caption{Approximation of $f_1''$ with the same parameters and the same truncation levels as in
Table~\ref{tbl1}.}\label{tbl2}
\centering
\begin{tabular}{ccccccc}
\hline
 & \multicolumn{3}{c}{$\alpha=1/2$} & \multicolumn{3}{c}{$\alpha=1$} \\
$\delta$ & $N$ & Fourier & de la Vall\'{e}e P. & $N$ & Fourier & de la Vall\'{e}e P. \\
\hline
$10^{-5}$ &  38 & $3.39\cdot10^{-4}$ & $1.31\cdot10^{-4}$ &  30 & $1.92\cdot10^{-4}$ & $1.76\cdot10^{-4}$ \\
$10^{-6}$ &  76 & $9.24\cdot10^{-5}$ & $5.68\cdot10^{-5}$ &  60 & $5.78\cdot10^{-5}$ & $4.93\cdot10^{-5}$ \\
$10^{-7}$ & 154 & $2.24\cdot10^{-5}$ & $1.73\cdot10^{-5}$ & 116 & $5.43\cdot10^{-5}$ & $1.79\cdot10^{-5}$ \\
$10^{-8}$ & 318 & $4.79\cdot10^{-5}$ & $1.29\cdot10^{-5}$ & 226 & $3.05\cdot10^{-5}$ & $2.45\cdot10^{-5}$ \\
\hline
\end{tabular}
\end{table}

A comparison of the two tables shows exactly what they are presented for: with one and the same number
of retained coefficients the second derivative is recovered worse than the first --- by an order of magnitude
at $\delta=10^{-5}$ and by more than two orders of magnitude at $\delta=10^{-8}$ --- and its
error decreases noticeably more slowly. This agrees with the exponent
$(\mu+1/s-1-r)/(\mu+1/s-1/p)$ of theorem \ref{Th_up}, which for $\alpha=1/2$ equals $0.53$ for
$r=1$ and only $0.22$ for $r=2$.

Figure~\ref{Fig1} shows the exact derivatives $\widehat f_1^{\,(r)}\sqrt w$ and their two
approximations.

\begin{figure}[ht]
\centering
\includegraphics[width=0.98\textwidth]{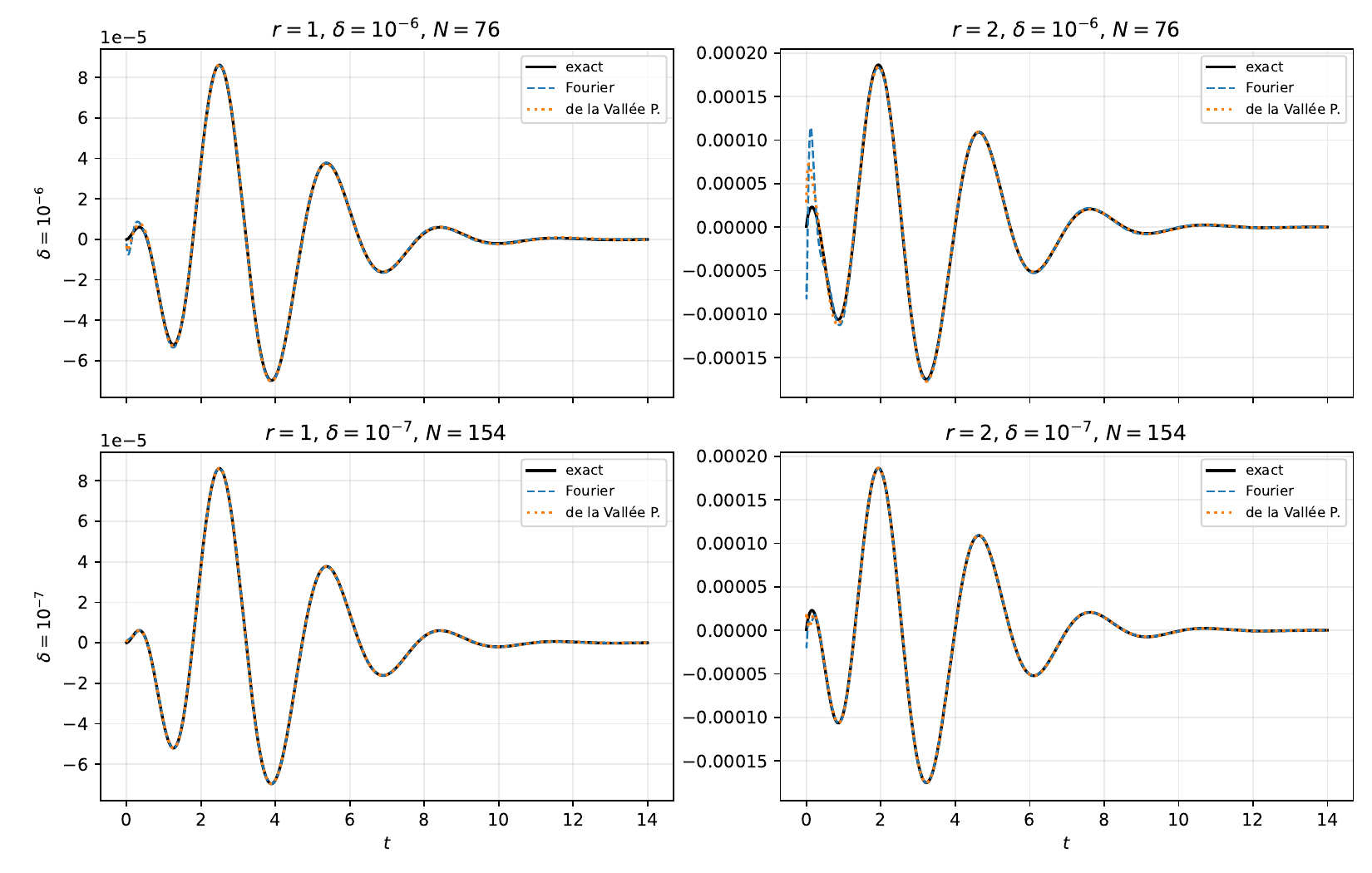}
\caption{Example 1, $\alpha=1/2$, $\mu=3.2$: the exact
$\widehat f_1^{\,\prime}\sqrt w$ (left) and $\widehat f_1^{\,\prime\prime}\sqrt w$ (right) and
their approximations by the two methods. Top row: $\delta=10^{-6}$, $N=76$; bottom:
$\delta=10^{-7}$, $N=154$. The noise realizations correspond to the second and third rows of
Table~\ref{tbl1}.}\label{Fig1}
\end{figure}

\subsection{Example 2: Laplace transform inversion}

Let $F$ be the Laplace transform of a function $f$. The Post--Widder inversion formula
\begin{equation}\label{postwidder}
P_{r}F(x)=\frac{(-1)^{r}}{r!}\Bigl(\frac{r}{x}\Bigr)^{r+1}F^{(r)}\Bigl(\frac{r}{x}\Bigr)
\ \xrightarrow[\ r\to\infty\ ]{}\ f(x)
\end{equation}
(see\ \cite[theorem 2.4]{Coh}) requires derivatives of \emph{different} orders of one and the same
function $F$, known approximately --- this is precisely the applied setting for which
remark \ref{rem_level} gives a saving: the truncation is chosen once.

Let us take $f(x)=e^{-3x}$, that is, $F(t)=1/(t+3)$; then $P_{r}F(x)=(1+3x/r)^{-(r+1)}$, and both
quantities are known exactly. The coefficients of $F$ are written as a one-dimensional integral with a positive
integrand: the substitution $(t+3)^{-1}=\int_0^\infty e^{-(t+3)v}dv$ and the identity
$\int_0^\infty e^{-pt}t^\alpha L^{(\alpha)}_k(t)\,dt=\frac{\Gamma(k+\alpha+1)}{k!}\frac{(p-1)^k}{p^{k+\alpha+1}}$
give
$$
\langle F,\ell^{(\alpha)}_k\rangle=\sqrt{\frac{\Gamma(k+\alpha+1)}{k!}}
\int_0^\infty e^{-3v}\,v^{k}(1+v)^{-(k+\alpha+1)}\,dv .
$$
By Laplace's method it follows from this that $\langle F,\ell^{(\alpha)}_k\rangle\asymp k^{\alpha/2}e^{-2\sqrt{3k}}$
--- a decay faster than any power, and therefore $F$ belongs to \emph{all} the classes $W^\mu_2$, and
$\mu$ is here assigned rather than computed. We take $\mu=4$ for both values of $\alpha$; the condition
$\mu>r+1-1/s$ holds both for $r=2$ and for $r=3$. The normalization norms equal
$1.76$ for $\alpha=1/2$ and $2.04$ for $\alpha=1$.

\begin{table}[H]
\caption{Approximation of $F''$, $F(t)=1/(t+3)$, $\mu=4$. The normalization norms:
$\|F\|_{2,\mu}=1.76$ for $\alpha=1/2$ and $2.04$ for $\alpha=1$.}\label{tbl3}
\centering
\begin{tabular}{cccccc}
\hline
 & & \multicolumn{2}{c}{$\alpha=1/2$} & \multicolumn{2}{c}{$\alpha=1$} \\
$\delta$ & $N$ & Fourier & de la Vall\'{e}e P. & Fourier & de la Vall\'{e}e P. \\
\hline
$10^{-5}$ &  18 & $4.37\cdot10^{-5}$ & $7.10\cdot10^{-4}$ & $3.65\cdot10^{-4}$ & $2.77\cdot10^{-4}$ \\
$10^{-6}$ &  32 & $6.33\cdot10^{-5}$ & $8.74\cdot10^{-5}$ & $8.97\cdot10^{-5}$ & $7.18\cdot10^{-6}$ \\
$10^{-7}$ &  58 & $2.16\cdot10^{-5}$ & $1.44\cdot10^{-5}$ & $6.12\cdot10^{-6}$ & $4.81\cdot10^{-6}$ \\
$10^{-8}$ & 100 & $5.53\cdot10^{-6}$ & $4.04\cdot10^{-7}$ & $2.40\cdot10^{-6}$ & $3.84\cdot10^{-6}$ \\
\hline
\end{tabular}
\end{table}

\begin{table}[H]
\caption{Approximation of $F^{(3)}$ with the same parameters and the same truncation levels as in
Table~\ref{tbl3}.}\label{tbl4}
\centering
\begin{tabular}{cccccc}
\hline
 & & \multicolumn{2}{c}{$\alpha=1/2$} & \multicolumn{2}{c}{$\alpha=1$} \\
$\delta$ & $N$ & Fourier & de la Vall\'{e}e P. & Fourier & de la Vall\'{e}e P. \\
\hline
$10^{-5}$ &  18 & $2.58\cdot10^{-4}$ & $3.03\cdot10^{-3}$ & $1.06\cdot10^{-3}$ & $1.23\cdot10^{-3}$ \\
$10^{-6}$ &  32 & $4.36\cdot10^{-4}$ & $5.43\cdot10^{-4}$ & $6.02\cdot10^{-4}$ & $6.80\cdot10^{-5}$ \\
$10^{-7}$ &  58 & $2.50\cdot10^{-4}$ & $1.14\cdot10^{-4}$ & $5.24\cdot10^{-5}$ & $5.40\cdot10^{-5}$ \\
$10^{-8}$ & 100 & $1.61\cdot10^{-4}$ & $1.59\cdot10^{-5}$ & $5.91\cdot10^{-5}$ & $5.90\cdot10^{-5}$ \\
\hline
\end{tabular}
\end{table}

\begin{figure}[!t]
\centering
\includegraphics[width=0.90\textwidth]{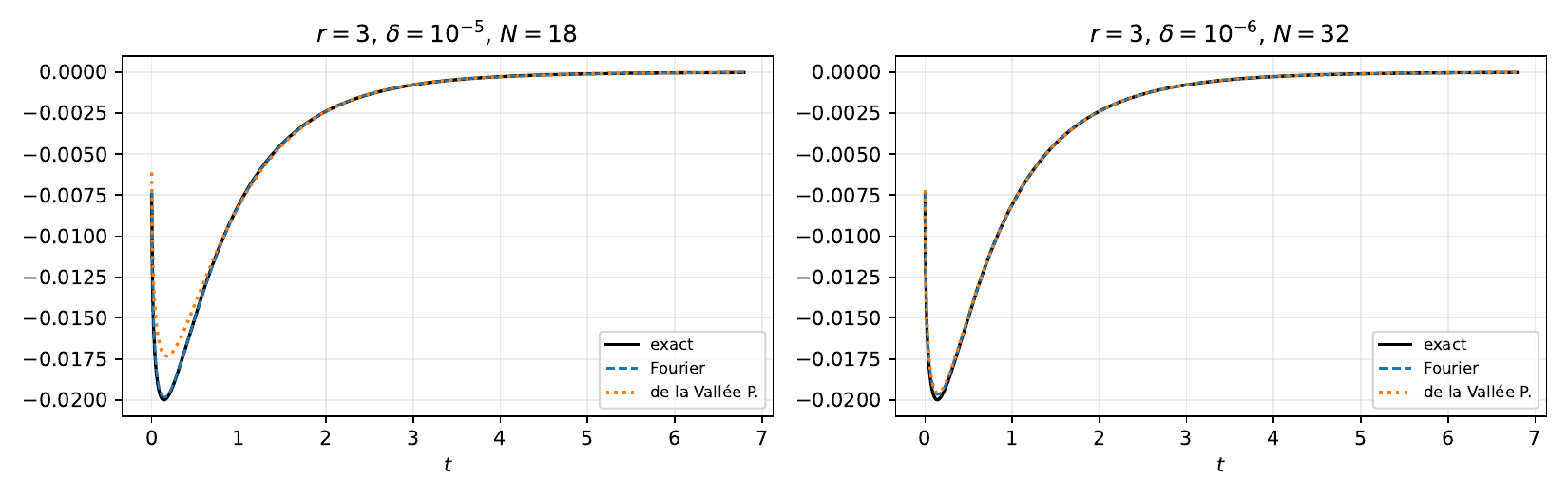}
\caption{Example 2, $\alpha=1/2$, $\mu=4$: the exact $\widehat F^{(3)}\sqrt w$ and its
approximations by the two methods. Left: $\delta=10^{-5}$, $N=18$; right:
$\delta=10^{-6}$, $N=32$. The noise realizations correspond to the first and second rows of
Table~\ref{tbl4}.}\label{Fig2}
\end{figure}

\end{document}